\newtheorem{theorem}{Theorem}[section]
\newtheorem{lemma}[theorem]{Lemma}
\theoremstyle{definition}
\newtheorem{remark}{Remark}
\newcommand{\ep}{\varepsilon}
\title[Wave equations with localized damping]
{Attractors for semilinear wave equations with localized damping and external forces}
\author[T. F. Ma and P. N. Seminario-Huertas]{}
\subjclass{Primary: 35B41, 35L71, 35B33; Secondary: 35B40.}
\keywords{Locally distributed damping, critical exponent, continuity of attractor, upper-semicontinuity, generalized exponential attractor.}
\email{matofu@icmc.usp.br}
\email{pseminar@icmc.usp.br}
\thanks{$^*$ Corresponding author: T. F. Ma}
\begin{document}
\maketitle

\centerline{\scshape To Fu Ma$^*$ and Paulo N. Seminario Huertas}

\medskip

{
\footnotesize
\centerline{Institute of Mathematical and Computer Sciences, University of S\~ao Paulo}
\centerline{S\~ao Carlos 13566-590, SP, Brazil}
} 

\bigskip

\centerline{\small \it Dedicated to Professor Tom\'as Caraballo on the occasion of his 60th birthday.}


\begin{abstract}
This paper is concerned with long-time dynamics of semilinear wave equations defined on bounded domains of $\mathbb{R}^3$  
with cubic nonlinear terms and locally distributed damping. The existence of regular finite-dimensional global attractors established by Chueshov, Lasiecka and Toundykov (2008) reflects a good deal of the current state of the art on this matter. Our contribution is threefold. First, we prove uniform boundedness of attractors with respect to a forcing parameter. Then, we study the continuity of attractors with respect to the parameter in a residual dense set. 
Finally, we show the existence of generalized exponential attractors. These aspects were not previously considered for wave equations with localized damping. 
\end{abstract}

\section{Introduction} 
This paper is concerned with long-time dynamics of semilinear wave equations of the form 
\begin{eqnarray} \label{P-ep}
\left\{
\begin{array}{ll}
\partial_t^2 u - \Delta u + a(x) g(\partial_t u) + f(u) = \varepsilon h(x) \;\; \mbox{in} \;\; \Omega \times \mathbb{R}^{+}, \smallskip \\
u=0 \;\; \mbox{on} \;\; \partial \Omega \times \mathbb{R}^{+}, \smallskip \\
u(0) = u_0, \;\; \partial_t u(0) = u_1 \;\; \mbox{in} \;\; \Omega, 
\end{array} 
\right. 
\end{eqnarray}
defined in a bounded domain $\Omega$ of $\mathbb{R}^3$ with smooth boundary $\partial \Omega$. 
We consider this problem with three distinguished features, namely, locally distributed damping, nonlinearity with critical Sobolev growth and external force with a parameter $\ep$. Here critical Sobolev growth means that $|f(u)|$ 
growths at most like $|u|^3$. As we will see, for a variety of $a,f,g,h$, problem \eqref{P-ep} has  
a unique finite energy solution in $C(\mathbb{R}^{+}, \mathcal{H})$, 
where $\mathcal{H} = H^1_0(\Omega) \times L^2(\Omega)$. Then the solution operator of \eqref{P-ep} defines a $C^0$-semigroup $\{S_{\ep}(t)\}_{t \ge 0}$ on $\mathcal{H}$.     

The existence of global attractors for wave equations with critical nonlinearity was firstly established by Arrieta, Carvalho and Hale \cite{ach}. They proved the existence of regular attractors in a framework of linear full damping $\partial_t u$, that is, with $a(x)=1$ and $g(s)=s$. Later, the existence of global attractors for wave equations with locally distributed damping was established by Feireisl and Zuazua \cite{fz}. 
They considered a nonlinear damping term $a(x)g(\partial_t u)$ localized in a collar of $\Omega$,  
that is, the support of $a(x)$ contains $\omega = \Omega \cap \mathcal{O}$, where $\mathcal{O}$ is some open neighborhood of $\partial \Omega$ in $\mathbb{R}^3$. The question of whether such attractors have finite fractal dimension was finally established by Chueshov, Lasiecka and Toundykov \cite{clt}. There, existence of regular finite dimensional attractors was proved for a more general damping region $\omega$, satisfying an observability condition (see Figure \ref{control}). Both damping regions $\omega$ in \cite{clt,fz} satisfy the geometric control condition (GCC) which asserts that every ray of geometric optics within $\Omega$ must reach the control region. See Figure \ref{GCC} and e.g. \cite{blr92,ralston,rauch-taylor}. 

\begin{figure}[htb] 
	\begin{center}
		\includegraphics[scale=0.4]{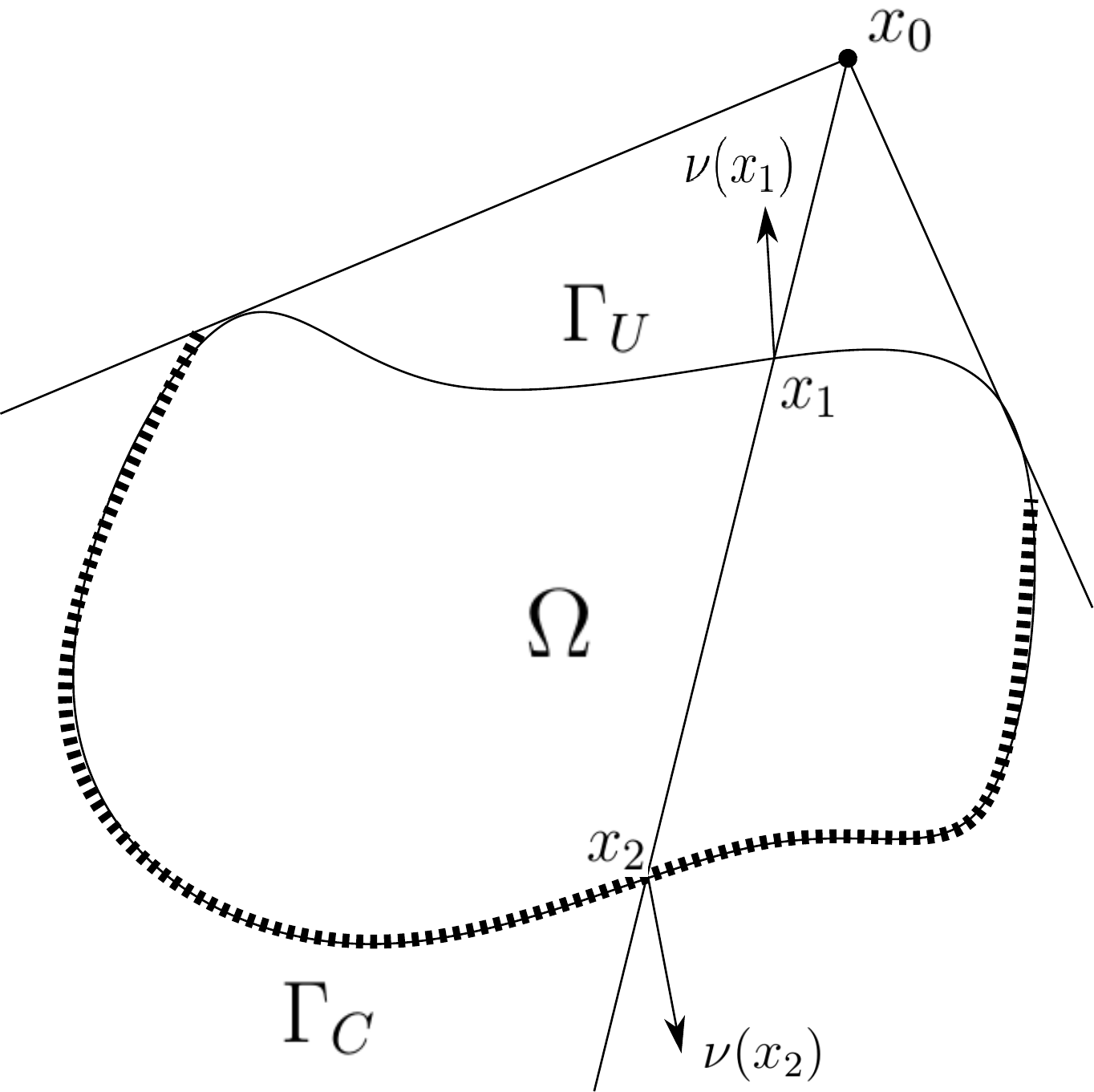} 
	\end{center}
	\caption{The sign of $(x-x_0) \cdot \nu(x)$ allows us to divide $\partial \Omega$ into an uncontrollable part $\Gamma_U$ and a controllable part $\Gamma_C$, where $x_0$ serves as an observer.} 
	\label{control}
\end{figure}

The results in \cite{ach,clt,fz} consider problem \eqref{P-ep} with external force $h=0$. Our objective is to assume $h\in L^2(\Omega)$ and 
study continuity aspects of attractors with respect to the parameter $\varepsilon \in [0, 1]$. We also study the existence of exponential attractors. 

The main contributions of the present paper are summarized as follows. 

$(a)$ The existence of attractors $\mathcal{A}_{\ep}$ for the system $(\mathcal{H},S_{\ep}(t))$, defined by \eqref{P-ep}, can be justified with \cite{clt}. However, since the damping term is in general effective only on a neighborhood of $\partial \Omega$, 
it is not easy to estimate the size of a bounded absorbing set. In other words, given an initial value $z$ in a bounded set $B \subset \mathcal{H}$, show the existence of a constant $C_{fh}>0$, independent of $B$, such that 
\begin{equation} \label{estimate-abs}
\| S_{\ep}(t) z \|_{\mathcal{H}} \le C_B e^{-\gamma t} + C_{fh}, \;\; \ep \in [0,1], \; t \ge 0. 
\end{equation}
Such an estimate would promptly show that attractors $\mathcal{A}_{\ep}$ 
are uniformly bounded. Indeed, in \cite{clt} the authors showed dissipativeness of the system by exploring 
its gradient structure combined with a unique continuation theorem \cite{toundykov}. 
In \cite{fz}, the authors proved existence of a bounded absorbing 
set by using a contradiction argument combined with a unique continuation theorem \cite{ruiz}, without 
showing an estimate like \eqref{estimate-abs}. Here, in Theorem \ref{thm-uniform}, we prove that attractors $\mathcal{A}_{\ep}$ are bounded uniformly with respect to $\ep \in [0,1]$. 

$(b)$ Once proved that attractors $\mathcal{A}_{\ep}$ are uniformly bounded, 
we can apply a recent result by Hoang, Olson and Robinson \cite{hoang} to study the continuity of 
$\ep \mapsto \mathcal{A}_{\ep}$ with respect to the Hausforff metric. Then, we prove that 
$\mathcal{A}_{\ep}$ is continuous on any $\ep \in J$, a dense {\it residual} subset of $[0,1]$.   
In addition, we show that upper-semicontinuity of $\mathcal{A}_{\ep}$ with respect to the parameter holds 
for all $\ep \in [0,1]$. See Theorems \ref{thm-lower} and \ref{thm-upper}. These results were not considered before for waves equations with locally distributed damping.

$(c)$ We recall that an exponential attractor for a system $(\mathcal{H}, S(t))$ 
is a compact set $\mathcal{M} \subset \mathcal{H}$ that is forward invariant, has finite fractal dimension, 
and exponentially attracts bounded sets of $\mathcal{H}$. Exponential attraction means that for any bounded set $D$, 
there exists a constant $\gamma_D>0$ such that   
$$
\lim_{t \to +\infty} e^{\gamma_D t} {\rm dist}_{\mathcal{H}}(S(t)D , \mathcal{M}) = 0,
$$  
where ${\rm dist}_{\mathcal{H}}$ represents the Hausdorff semi-distance in $\mathcal{H}$ (e.g. Eden et al \cite{efnt}). In general, establishing existence of exponential attractors for nonlinear wave equations 
is a difficult task. Indeed, most results are related to parabolic like equations. However, we shall prove the existence of so-called {\it generalized} exponential attractors (cf. Chueshov and Lasiecka \cite{CL-yellow}).  
This differs from the former one by requiring that the attractor have finite fractal dimension in a possibly larger space 
$\widetilde{\mathcal{H}}$ containing $\mathcal{H}$. See Theorem \ref{thm-exp}. 

\begin{figure}[htb] 
\begin{center}
\includegraphics[scale=0.4]{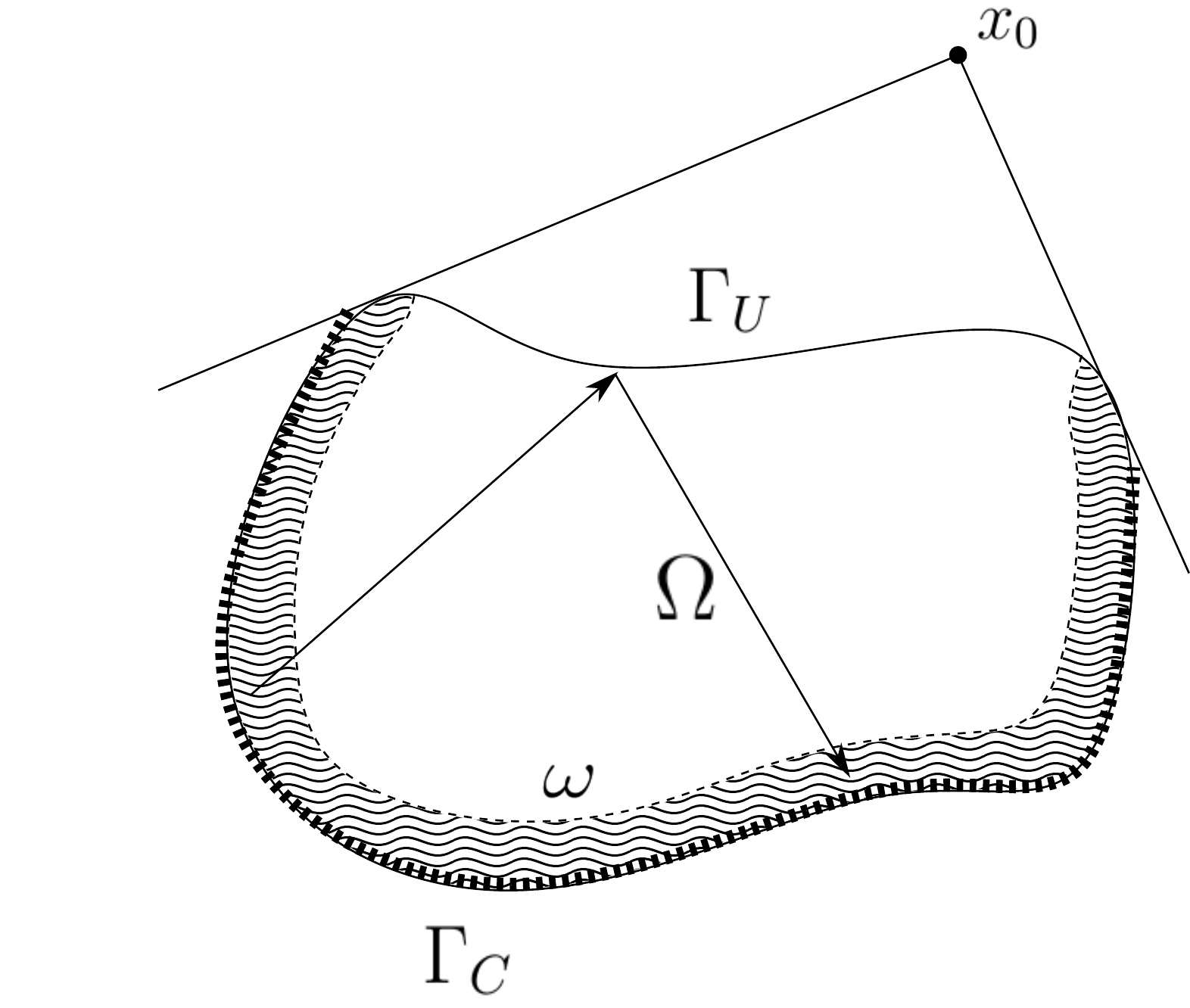} 
\end{center}
\caption{The control region $\omega$ satisfies (GCC). Any ray of geometric optics inside $\Omega$ hits $\omega$.} 
\label{GCC}
\end{figure}

\section{Well-posedness and global attractors} 
Our study is based in part on the results presented in \cite{clt} concerned with problem 
\eqref{P-ep}, with $h=0$, in the energy space 
$$
\mathcal{H} = H^1_0(\Omega) \times L^2(\Omega).
$$ 
The norm in $\mathcal{H}$ is given by  
$$
\| (u,v)\|_{\mathcal{H}}^2 = \|\nabla u\|_2^2 + \| v \|_2^2, 
$$
where $\| \cdot \|_{p}$ denotes $L^p(\Omega)$-norms. The existence of strong solutions is placed in  
$$
\mathcal{H}_{1} = (H^2(\Omega) \cap H^1_0(\Omega)) \times H^1_0(\Omega).
$$ 
We shall freely use standard notations and properties of Sobolev spaces as in e.g. \cite{hale,temam}.  

\subsection{Assumptions} 
Let $\Omega$ be a bounded domain of $\mathbb{R}^3$ with smooth boundary $\partial \Omega$.  
We consider nonlinear structural forces $f \in C^2(\Omega)$ satisfying 
\begin{equation} \label{f1}
|f''(s)|\le c_f(1+|s|), \;\; s \in \mathbb{R}
\end{equation}
and
\begin{equation} \label{f2}
\limsup_{|s|\to \infty}\frac{f(s)}{s} > - \lambda_1,
\end{equation}
where $\lambda_1>0$ is the principal eigenvalue of the Laplacian operator $-\Delta$ with Dirichlet boundary condition. 
With respect to the damping term, we assume $g \in C^1(\mathbb{R})$ with $g(0)=0$ and such that  
\begin{equation} \label{g}
g(s)s \ge 0 \;\; \mbox{and} \;\; m\le g'(s)\le M, \;\; s \in \mathbb{R}, 
\end{equation}
for some constants $m,M >0$. For the external force we take  
\begin{equation} \label{h}
h \in L^2(\Omega). 
\end{equation}
Finally, for the control/damping region we assume there exists a point $x_0 \in \mathbb{R}^3 \setminus \overline{\Omega}$ such that a (uncontrolled) part of the boundary 
$$
\Gamma_U = \{x \in \partial \Omega \, | \, (x-x_0)\cdot \nu(x) \le 0 \} 
$$
is nonempty, where $\nu$ denotes the outward normal vector on $\partial \Omega$. 
We also define an open connected (controlled) part of $\Gamma_C$ of $\partial \Omega$ that satisfies $\Gamma_C \cup \Gamma_U = \partial \Omega$,  possibly overlapping  (see figure \ref{control}). Then we can define the control/damping region 
\begin{equation} \label{omega}
\omega = \{x \in \Omega \, | \, {\rm dist}(x, \Gamma_C) \le \delta \}, 
\end{equation}
for some $\delta >0$. Finally we take $a \in L^{\infty}(\Omega)$, nonnegative, such that for some $a_0>0$, 
\begin{equation} \label{x2}
a(x) \ge a_0  \;\; \mbox{on} \;\; \omega. 
\end{equation}

\begin{remark}	$(a)$ The boundedness of $g'(s)$ in the assumption \eqref{g} is not necessary for global existence. As shown in \cite{clt}, this is essential for the proof that attractors have finite fractal dimension. $(b)$ We observe that $\omega$ constructed in \eqref{omega} satisfies the geometric control condition (GCC). See figure \ref{GCC}. \qed
\end{remark}	
	
\subsection{Well-posedness} 
We can write problem \eqref{P-ep} as a Cauchy problem
\begin{equation}\label{P-U}
\partial_t U + \mathbb{A} U + \mathbb{F} U = \mathbb{H}, \quad U(0) = [u_0,u_1]^{\top}, 
\end{equation}
defined in $\mathcal{H}$, 
where
$$
U = \begin{bmatrix}
u  \\ \partial_t u  \\
\end{bmatrix}, \quad 
\mathbb{A} = \begin{bmatrix}
0 & -Id  \\ -\Delta & a(x)g(\cdot)  \\
\end{bmatrix}, \quad 
\mathbb{F} = \begin{bmatrix}
0 & 0  \\ f(\cdot) & 0 \\
\end{bmatrix}, \quad 
\mathbb{H} = \begin{bmatrix}
0  \\ \varepsilon h(x)  \\
\end{bmatrix},
$$
and $D(\mathbb{A}) = \mathcal{H}_1$. 

\begin{theorem} [Well-posedness \cite{clt}] \label{thm-wp} Suppose that hypotheses \eqref{f1}-\eqref{x2} are satisfied and $\ep \in [0,1]$. Then we have:  
\begin{enumerate}
\item If $(u_0,u_1) \in \mathcal{H}$ then problem \eqref{P-ep} has a unique solution  
$$
u \in C(\mathbb{R}^+;H^1_0(\Omega))\cap C^1(\mathbb{R}^+;L^2(\Omega)).  
$$
\item If $(u_0,u_1) \in \mathcal{H}_1$, then above solution has stronger regularity  
$$
u \in C(\mathbb{R}^+;H^2(\Omega)\cap H^1_0(\Omega))\cap C^1(\mathbb{R}^+;H^1_0(\Omega)).
$$
\item Let us denote the solution operator by $S_{\ep}(t)$. Then given $T>0$ and a bounded set $B$ of $\mathcal{H}$,  
there exists a constant $C_{BT}>0$ such that
\begin{equation} \label{S1} 
\|S_{\varepsilon}(t)z^1-S_{\varepsilon}(t)z^2\|_{\mathcal{H}}\le C_{BT} \|z^1-z^2\|_{\mathcal{H}},
\end{equation}
for all $t \in [0,T]$ and $z^1,z^2 \in B$.
\end{enumerate}
\end{theorem}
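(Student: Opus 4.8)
The plan is to read \eqref{P-ep} as the abstract problem \eqref{P-U} on $\mathcal{H}$ and to combine the theory of nonlinear contraction semigroups generated by maximal monotone operators with a locally Lipschitz perturbation, ruling out finite‑time blow‑up with a coercive energy functional. \textbf{Step 1 (the operator $\mathbb{A}$ is maximal monotone on $\mathcal{H}$).} Using the inner product $(\nabla\cdot,\nabla\cdot)_2+(\cdot,\cdot)_2$ of $\mathcal{H}$ and integration by parts with $v,\widetilde v\in H^1_0(\Omega)$, one gets for $U=[u,v]^{\top},\ \widetilde U=[\widetilde u,\widetilde v]^{\top}\in D(\mathbb{A})=\mathcal{H}_1$
\[
\langle \mathbb{A}U-\mathbb{A}\widetilde U,\ U-\widetilde U\rangle_{\mathcal{H}}=\int_\Omega a(x)\big(g(v)-g(\widetilde v)\big)(v-\widetilde v)\,dx\ge 0,
\]
since $a\ge 0$ and $g$ is nondecreasing by \eqref{g}. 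Maximality follows once the resolvent equation $U+\mathbb{A}U=[g_1,g_2]^{\top}$ is solved in $\mathcal{H}$: eliminating $v=u-g_1$ it becomes the elliptic problem $-\Delta u+u+a(x)g(u-g_1)=g_1+g_2$ in $H^1_0(\Omega)$, uniquely solvable by Minty--Browder (the operator is monotone, coercive, hemicontinuous, and $a\,g(u-g_1)\in L^2(\Omega)$ because $|g(s)|\le M|s|$), with $u\in H^2(\Omega)\cap H^1_0(\Omega)$ by elliptic regularity. Hence $-\mathbb{A}$ generates a strongly continuous semigroup of nonlinear contractions on $\mathcal{H}$, defined on all of $\mathcal{H}$ because $\overline{\mathcal{H}_1}=\mathcal{H}$.

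\textbf{Step 2 (locally Lipschitz nonlinearity and local solutions).} From \eqref{f1} one has $|f'(s)|\le C(1+|s|^2)$, so for $\|\nabla u_i\|_2\le\rho$, the Sobolev embedding $H^1_0(\Omega)\hookrightarrow L^6(\Omega)$ valid in $\mathbb{R}^3$, Hölder's inequality and the mean value theorem yield
\[
\|f(u_1)-f(u_2)\|_2\le C(1+\rho^2)\,\|\nabla(u_1-u_2)\|_2 ,
\]
so $\mathbb{F}:\mathcal{H}\to\mathcal{H}$ is Lipschitz on bounded sets; also $\mathbb{H}\in\mathcal{H}$ by \eqref{h}. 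By the classical theory of Lipschitz perturbations of maximal monotone operators, \eqref{P-U} admits, for every $U(0)\in\mathcal{H}$, a unique mild solution on a maximal interval $[0,T_{\max})$, which is a strong solution (with $U(t)\in D(\mathbb{A})$ for all $t$) whenever $U(0)\in D(\mathbb{A})$. The contraction property of the semigroup generated by $-\mathbb{A}$, the estimate above and Gronwall's lemma give uniqueness and, once global existence is known, \eqref{S1} on each $[0,T]$ with $\rho=\sup\{\|S_\ep(t)z\|_{\mathcal{H}}:z\in B,\,t\in[0,T]\}$.

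\textbf{Step 3 (global existence, regularity, conclusion).} Testing \eqref{P-ep} with $\partial_t u$ (rigorously for strong solutions, then for all finite‑energy solutions by density) and writing $F(s)=\int_0^s f(\tau)\,d\tau$, the functional
\[
E(t)=\tfrac12\|\partial_t u(t)\|_2^2+\tfrac12\|\nabla u(t)\|_2^2+\int_\Omega F(u(t))\,dx-\ep\int_\Omega h\,u(t)\,dx
\]
is nonincreasing, since $\tfrac{d}{dt}E(t)=-\int_\Omega a(x)g(\partial_t u)\partial_t u\,dx\le 0$. Assumption \eqref{f2} together with the Poincaré inequality $\lambda_1\|u\|_2^2\le\|\nabla u\|_2^2$ makes the potential part coercive, so that $c_0\|(u(t),\partial_t u(t))\|_{\mathcal{H}}^2-C_1\le E(t)\le E(0)$ with $c_0>0$ and $C_1$ independent of $t$ and of $\ep\in[0,1]$; this bounds the $\mathcal{H}$‑norm on $[0,T_{\max})$, forces $T_{\max}=+\infty$, and furnishes the bound used in Step 2, proving (1) and (3). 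For (2), when $U(0)\in\mathcal{H}_1$ the solution is strong, hence $\partial_t^2 u\in L^\infty_{\mathrm{loc}}(\mathbb{R}^+;L^2(\Omega))$ and $-\Delta u=\ep h-\partial_t^2 u-a(x)g(\partial_t u)-f(u)\in L^\infty_{\mathrm{loc}}(\mathbb{R}^+;L^2(\Omega))$ (as $|g(\partial_t u)|\le M|\partial_t u|$ with $\partial_t u\in H^1_0(\Omega)$, and $f(u)\in L^2(\Omega)$ as above), so $u\in L^\infty_{\mathrm{loc}}(\mathbb{R}^+;H^2(\Omega))$ by elliptic regularity; the claimed continuity in time follows by differentiating \eqref{P-ep} in $t$ and running an $\mathcal{H}$‑energy estimate for $w=\partial_t u$, using $m\le g'(\partial_t u)\le M$ and that $f'(u)$ remains bounded in $L^3(\Omega)$, then passing from weak to strong continuity in the standard way.

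I expect the main obstacle to be Step 1 --- namely the surjectivity of $I+\mathbb{A}$ in the presence of the nonlinear, only $L^\infty$‑coefficient damping $a(x)g(\cdot)$ --- together with the approximation argument justifying the energy identity for merely finite‑energy solutions in Step 3. The critical cubic growth of $f$ is, by contrast, not an obstruction here: condition \eqref{f1} is precisely what makes $\mathbb{F}$ locally Lipschitz from $\mathcal{H}$ into $\mathcal{H}$ via the embedding $H^1_0(\Omega)\hookrightarrow L^6(\Omega)$, which drives the whole local theory.
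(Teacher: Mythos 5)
Your proposal is correct and follows essentially the same route the paper takes (via \cite{clt} and its Remark after Theorem \ref{thm-wp}): $\mathbb{A}$ maximal monotone on $\mathcal{H}$, $\mathbb{F}$ locally Lipschitz thanks to \eqref{f1} and the embedding $H^1_0(\Omega)\hookrightarrow L^6(\Omega)$, and global extension plus the Lipschitz estimate \eqref{S1} from the nonincreasing coercive energy and Gronwall. The paper merely cites this argument rather than writing it out, so your filled-in details (resolvent solvability by Minty--Browder, elliptic regularity for part (2)) are consistent with, not divergent from, its proof.
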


\begin{remark} 
$(a)$ Theorem \ref{thm-wp} is a known result. A detailed proof with $h=0$ is presented in \cite{clt}. 
Since the external force $h=h(x)$ is not time-dependent it does not change the arguments in \cite{clt}.
Essentially, under the hypotheses, $\mathbb{A}$ is maximal monotone in $\mathcal{H}$, and $\mathbb{F}$ is locally Lipschitz in $\mathcal{H}$ since 
the nonlinear perturbation $f(u)$ is locally Lipschitz from $H^1_0(\Omega)$ to $L^2(\Omega)$. 
$(b)$ The continuity estimate \eqref{S1} shows that solution operator $S_{\varepsilon}(t)$ is a $C^0$-semigroup on $\mathcal{H}$.  \qed 
\end{remark}

We end this subsection with some remarks on the energy of the system. Given any solution $u \in C(\mathbb{R}^+;H^1_0(\Omega)) \cap C^1(\mathbb{R}^+;L^2(\Omega))$ of problem \eqref{P-ep} 
we define its total energy by setting  
$$ 
\mathcal{E}_{\ep}(t) = \frac{1}{2} \|(u(t),\partial_t u(t))\|^2_{\mathcal{H}} 
+ \int_{\Omega} F(u(t))\,dx - \ep \int_{\Omega} h(x) u(t) \, dx,
$$ 
where $F(u) = \int_{0}^{u}f(r) \,dr$. 

\begin{lemma} 
There exist positive constants $\nu_1,c_1,c_2$, depending on $f$ and $h$, such that 
\begin{equation} \label{desiEnergy}
\nu_1 \|(u,\partial_t u)\|^2_{\mathcal{H}} - c_1 \le \mathcal{E}_{\ep} \le c_2(1+\|(u,\partial_t u)\|^4_{\mathcal{H}}), 
\;\; \forall \,\ep \in [0,1].
\end{equation} 
In addition, 
\begin{equation} \label{Energy-prime}
\frac{d}{dt} \mathcal{E}_{\ep}  = - \int_{\Omega} a(x)g(\partial_t u)\partial_t u \, dx, 
\;\; \forall \, \ep \in [0,1].
\end{equation}

\begin{proof} The proof is standard. Indeed, the first and second inequalities in \eqref{desiEnergy} follow 
from assumptions \eqref{f2} and \eqref{f1} respectively, for any $\ep \in [0,1]$. 
We sketch a proof of the first inequality since it will be used some times. 
Indeed, from assumption \eqref{f2} we can choose $\delta >0$ (small) such that 
$\liminf f(s)s^{-1} \ge - (\lambda_1- \delta)$ as $|s| \to \infty$. This implies the existence of a constant $k_F>0$ such that 
$$
F(s) \ge  \frac{-\lambda_1 + \delta}{2} s^2 - k_F , \quad \forall \, s \in \mathbb{R}.  
$$
Then 
$$
\int_{\Omega} F(u) \, dx \ge \frac{1}{2} \left( \frac{-\lambda_1 + \delta}{\lambda_1} \right) \|\nabla u\|_2^2 - k_F |\Omega|.   
$$
From this we infer the first inequality in \eqref{desiEnergy} by taking $\nu_1 = \delta/(2 \lambda_1)$. 
To obtain the energy estimate \eqref{Energy-prime} we integrate first equation of \eqref{P-ep} multiplied by $\partial_t u$. 	
\end{proof}
\end{lemma}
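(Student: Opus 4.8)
The plan is to establish the two assertions of the lemma separately: the two-sided bound \eqref{desiEnergy} follows from elementary estimates on the primitive $F$ and on the force term, while the identity \eqref{Energy-prime} is obtained by testing the equation with $\partial_t u$, the only delicate point being the level of regularity at which this multiplication is licit.

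For the lower bound in \eqref{desiEnergy}, I would first extract from assumption \eqref{f2} a quadratic lower bound $F(s) \ge \tfrac{-\lambda_1+\delta}{2}s^2 - k_F$ for all $s\in\mathbb{R}$, with $\delta>0$ small and $k_F>0$; integrating over $\Omega$ and using the Poincaré inequality $\lambda_1\|u\|_2^2\le\|\nabla u\|_2^2$ gives $\int_\Omega F(u)\,dx \ge \tfrac12\bigl(\tfrac{-\lambda_1+\delta}{\lambda_1}\bigr)\|\nabla u\|_2^2 - k_F|\Omega|$. The force term is controlled uniformly in $\ep\in[0,1]$ by Cauchy--Schwarz and Young, $\ep\bigl|\int_\Omega hu\,dx\bigr|\le \|h\|_2\|u\|_2\le \eta\|\nabla u\|_2^2 + C_\eta\|h\|_2^2$ for arbitrary $\eta>0$. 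Adding $\tfrac12\|(u,\partial_t u)\|_{\mathcal{H}}^2$ makes the coefficient of $\|\nabla u\|_2^2$ equal to $\tfrac{\delta}{2\lambda_1}-\eta$, which is positive once $\eta$ is chosen small, giving \eqref{desiEnergy}$_{\mathrm{left}}$ with $\nu_1$ a fixed fraction of $\delta/(2\lambda_1)$ (and $\le 1/2$ to absorb the kinetic term). For the upper bound, integrating \eqref{f1} twice yields the critical growth $|f(s)|\le C(1+|s|^3)$ and hence $|F(s)|\le C(1+|s|^4)$; then the three-dimensional Sobolev embedding $H^1_0(\Omega)\hookrightarrow L^6(\Omega)\hookrightarrow L^4(\Omega)$ gives $\int_\Omega|F(u)|\,dx\le C(1+\|u\|_4^4)\le C(1+\|\nabla u\|_2^4)$, while $\ep\bigl|\int_\Omega hu\,dx\bigr|\le C(1+\|\nabla u\|_2^2)$; collecting terms produces the right-hand side of \eqref{desiEnergy}.

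For \eqref{Energy-prime}, I would multiply the first equation of \eqref{P-ep} by $\partial_t u$ and integrate over $\Omega$: the terms $\partial_t^2 u$ and $-\Delta u$ combine to $\tfrac{d}{dt}\tfrac12\|(u,\partial_t u)\|_{\mathcal{H}}^2$ (the boundary integral vanishing by the homogeneous Dirichlet condition), the term $f(u)\partial_t u$ integrates to $\tfrac{d}{dt}\int_\Omega F(u)\,dx$, and since $h=h(x)$ is time independent the term $\ep h\,\partial_t u$ integrates to $\ep\tfrac{d}{dt}\int_\Omega hu\,dx$; what is left on the left-hand side is exactly $\int_\Omega a(x)g(\partial_t u)\partial_t u\,dx$, which is the asserted identity.

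The main obstacle, though a mild one, is justifying the multiplier computation at the level of finite-energy solutions, where $\partial_t u$ is only in $L^2(\Omega)$ and $t\mapsto\int_\Omega F(u)\,dx$ is not obviously differentiable. For strong solutions $(u_0,u_1)\in\mathcal{H}_1$ this is immediate. For weak solutions one passes to the limit from strong ones, using the continuous dependence \eqref{S1} together with the bound $\|f(u)\|_2\le C(1+\|\nabla u\|_2^3)$ coming from \eqref{f1}, which makes both sides of \eqref{Energy-prime} continuous with respect to the data; alternatively a Strauss-type regularization of the equation delivers the energy identity directly for weak solutions. All estimates above are uniform in $\ep\in[0,1]$, so no extra care is needed in the parameter.
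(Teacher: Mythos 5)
Your proposal is correct and follows essentially the same route as the paper: the quadratic lower bound on $F$ from \eqref{f2} combined with Poincar\'e for the left inequality, the growth bound $|F(s)|\le C(1+|s|^4)$ from \eqref{f1} with the embedding $H^1_0(\Omega)\hookrightarrow L^4(\Omega)$ for the right one, and multiplication by $\partial_t u$ for \eqref{Energy-prime}. You merely fill in details the paper leaves implicit (absorbing the $\ep\int_\Omega hu\,dx$ term via Young, and the density argument justifying the energy identity for finite-energy solutions), which is consistent with its sketch.
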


\begin{remark} \label{rem-E0}
Since the energy $\mathcal{E}_{\ep}(t)$ is not increasing, using \eqref{desiEnergy} we can estimate 
$\|S_{\ep}(t)z\|_{\mathcal{H}}$ by the size of initial value $\| z\|_{\mathcal{H}}$. In particular, 
all solution trajectories with initial value in a bounded set $B$ remain uniformly bounded. Indeed,   
\begin{align*} 
\|S_{\ep}(t)z\|_{\mathcal{H}}^2  
& \le \frac{1}{\nu_1}\left( \mathcal{E}_{\ep}(0) + c_1 \right) \\
& \le \frac{1}{\nu_1}\left( c_2 \big(1+ \|z\|_{\mathcal{H}}^4 \big) + c_1 \right),  
\end{align*}
independently of $\ep \in [0,1]$.  \qed
\end{remark}

\subsection{Global attractors}
In this section we recast the main result in \cite{clt} on the existence of regular finite dimensional attractors for critical waves with locally distributed damping. We recall that a global attractor for a system $(X,S(t))$, where $X$ is a complete metric space and $S(t)$ is a $C^0$-semigroup, is a nonempty fully invariant compact set 
$A \subset X$ that attracts bounded sets of $X$. The fractal (box-counting) dimension of a compact set 
$A$ is defined 
$$
{\rm dim}_{f}^X (A) = \limsup_{r \to 0} \frac{\ln(N_r)}{\ln(1/r)}, 
$$
where $N_r$ is the minimal number of closed balls $\overline{B}(0,2r)$ necessary to cover $A$.  
See e.g. \cite{babin-vishik,hale,lady,temam} or \cite[Chapter 7]{CL-yellow}.   

\begin{theorem} [Global attractors \cite{clt}] \label{thm-attractor} Suppose that hypotheses \eqref{f1}-\eqref{x2} are satisfied and $\ep \in [0,1]$. 
Then:
\begin{enumerate}
\item The dynamical system $(\mathcal{H},S_{\ep}(t))$ corresponding to problem \eqref{P-ep} possesses a global attractor $\mathcal{A}_{\ep}$ with finite fractal dimension. 

\item Let $\mathcal{N}_{\ep}$ denote the set of stationary solutions of the problem \eqref{P-ep}. Then we have 
$$
\mathcal{A}_{\ep} = \mathbb{W}^u(\mathcal{N}_{\ep}), 
$$
the unstable manifold emanating from the stationary points. 
 
\item The attractor $\mathcal{A}_{\ep}$ has higher regularity $\mathcal{H}_1$ 
and its full trajectories $(u(t),\partial_t u(t))$, $t \in \mathbb{R}$, satisfy  
\begin{equation} \label{smooth} 
\|(\partial_t u(t),\partial^2_t u(t))\|_{\mathcal{H}}\le \mathcal{Q}(\|\mathcal{A}_{\varepsilon}\|_{\mathcal{H}}),
\end{equation}
where $\mathcal{Q}(\cdot)$ is a generic increasing positive function not depending on $\ep$ or on a particular trajectory.
\end{enumerate}
\end{theorem}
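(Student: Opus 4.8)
Since the forcing term $\varepsilon h(x)$ in \eqref{P-ep} is time-independent, the entire argument of \cite{clt} transfers with the obvious adjustment of the energy functional, so it suffices to indicate how the three assertions are obtained. The plan rests on three pillars: (i) the system $(\mathcal{H},S_\varepsilon(t))$ is gradient; (ii) it is asymptotically smooth (indeed, quasi-stable on bounded invariant sets); (iii) the set $\mathcal{N}_\varepsilon$ of equilibria is bounded in $\mathcal{H}$. Granting these, the abstract theory of gradient systems (see e.g.\ \cite[Chapter~7]{CL-yellow}) yields a global attractor with $\mathcal{A}_\varepsilon=\mathbb{W}^u(\mathcal{N}_\varepsilon)$, which is assertion (2), while the quasi-stability property will deliver at once the finite fractal dimension in (1) and the regularity statement (3).

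For (i), the functional $\mathcal{E}_\varepsilon$ is a strict Lyapunov function: by \eqref{Energy-prime} it is non-increasing along trajectories, and if $t\mapsto\mathcal{E}_\varepsilon(S_\varepsilon(t)z)$ is constant on an interval then $a(x)g(\partial_t u)\partial_t u\equiv 0$, hence $\partial_t u\equiv 0$ on $\omega$ because of \eqref{x2} and $g(s)s>0$ for $s\neq 0$; the unique continuation theorem of Toundykov \cite{toundykov}, applicable precisely because $\omega$ was built in \eqref{omega} to satisfy the geometric control condition, then forces $\partial_t u\equiv 0$ on $\Omega$, so $z\in\mathcal{N}_\varepsilon$. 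For (iii), an equilibrium $u$ satisfies $-\Delta u+f(u)=\varepsilon h$ in $H^1_0(\Omega)$ (using $g(0)=0$); testing with $u$ and invoking \eqref{f2} exactly as in the proof of the lower bound in \eqref{desiEnergy} bounds $\|\nabla u\|_2$ in terms of $f$ and $\|h\|_2$ only, hence uniformly in $\varepsilon\in[0,1]$, and elliptic regularity lifts this to a bound in $\mathcal{H}_1$.

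The core is (ii). Writing $w=u^1-u^2$ for the difference of two trajectories contained in a bounded forward-invariant set, one multiplies the equation for $w$ by $w$ and by the multiplier $(x-x_0)\cdot\nabla w$ (together with the usual lower-order correctors) and integrates over $\Omega\times[0,t]$. The boundary flux generated on $\Gamma_U$ carries the favorable sign $(x-x_0)\cdot\nu\le 0$, while the flux on $\Gamma_C$ is absorbed by the damping, which by \eqref{x2} is effective on the collar $\omega$ of $\Gamma_C$; the bound $g'(s)\le M$ from \eqref{g} is what lets one control $g(\partial_t u^1)-g(\partial_t u^2)$ linearly by $\partial_t w$. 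This produces an observability-type inequality modulo lower-order terms involving the cubic nonlinearity; because the Sobolev growth is critical these terms are not absorbed by compact embedding alone, so one closes the estimate by a compactness--uniqueness argument that again invokes \cite{toundykov}. The outcome is a quasi-stability estimate
$$
\|S_\varepsilon(t)z^1-S_\varepsilon(t)z^2\|_{\mathcal{H}}^2 \le C e^{-\gamma t}\|z^1-z^2\|_{\mathcal{H}}^2 + C\sup_{s\in[0,t]}\|u^1(s)-u^2(s)\|_{L^2(\Omega)}^2 ,
$$
valid on bounded invariant sets with $C,\gamma$ depending only on the structural data and on the size of the set. By the standard consequences of quasi-stability (\cite[Chapter~7]{CL-yellow}) this yields asymptotic smoothness --- hence, combined with (i) and (iii), assertion (1) including finite fractal dimension of $\mathcal{A}_\varepsilon$ --- and, after differentiating \eqref{P-ep} in time and viewing $\partial_t u$ as a solution of a linear damped wave equation with bounded coefficients on $\mathcal{A}_\varepsilon$, the inclusion $\mathcal{A}_\varepsilon\subset\mathcal{H}_1$ with the bound \eqref{smooth}; since every constant entering these estimates depends on $\varepsilon$ only through $\|\mathcal{A}_\varepsilon\|_{\mathcal{H}}$, the function $\mathcal{Q}$ in \eqref{smooth} may be chosen independent of $\varepsilon$, which is assertion (3).

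I expect the main obstacle to be step (ii): the interaction between the \emph{critical} cubic growth of $f$, which destroys the compactness of the energy-level embeddings and so prevents a soft absorption of the lower-order terms, and the fact that the damping acts only near $\Gamma_C$, leaving the energy flux across $\Gamma_U$ to be controlled purely geometrically. Both difficulties are overcome by the construction \eqref{omega} of $\omega$, guaranteeing (GCC), together with the unique continuation theorem of \cite{toundykov}; everything else reduces to a now-standard, if lengthy, multiplier computation.
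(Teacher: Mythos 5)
Your overall strategy coincides with the paper's own treatment: the theorem is quoted from \cite{clt}, the paper's only proved ingredient is your pillar (i) (gradient structure via the energy Lyapunov function plus unique continuation, Lemma \ref{lemma-gradient}, where the paper invokes \cite[Theorem 2.2]{fu}), and pillars (ii)--(iii) together with the passage to assertions (1)--(3) are deferred, exactly as you do, to the quasi-stability/gradient-system machinery of \cite[Chapter 7]{CL-yellow} and to \cite{clt}; the observation that the autonomous forcing $\varepsilon h$ changes nothing structural is also the paper's. The one point where you genuinely diverge, and where your sketch is weakest, is the technical core (ii): \cite{clt} (and the paper's remark after the theorem) obtain the stabilizability inequality not from the classical multiplier $(x-x_0)\cdot\nabla w$ closed by a compactness--uniqueness argument, but from Carleman-type observability estimates \cite{fu,LTY,LTZ}. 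A compactness--uniqueness closure on the \emph{difference} equation is problematic here: the potential $\int_0^1 f'(\lambda u^1+(1-\lambda)u^2)\,d\lambda$ is only of class $L^\infty_t L^3_x$ for critical cubic $f$, and the available unique continuation theorems (\cite{toundykov,fu}, cf. also the analyticity needed in \cite{joly-laurent,rz}) do not apply at this regularity; moreover quasi-stability does not ask you to absorb the lower-order term at all --- the term $\sup_s\|w(s)\|_2^2$ is deliberately retained as the compact seminorm, and a contradiction argument would in any case yield non-explicit constants, undermining the finite-dimensionality conclusion. So your target inequality and the consequences you draw from it (attractor as $\mathbb{W}^u(\mathcal{N}_\varepsilon)$, finite fractal dimension, $\mathcal{H}_1$-regularity with an $\varepsilon$-independent $\mathcal{Q}$) are exactly those of the paper, but the route to the quasi-stability estimate should be the Carleman-based observability argument of \cite{clt} (recalled in Lemma \ref{pp}) rather than the multiplier-plus-unique-continuation scheme you describe.
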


\begin{remark}
The proof the theorem, with $h=0$, was presented in \cite{clt}. Since $h$ is an autonomous perturbation, the proof of the theorem with $h$ follows with same methods and arguments. Firstly, it is shown that the system has a gradient structure. This is done by using a unique continuation theorem in \cite{fu,toundykov}. Then, a difficult part is to show asymptotic regularity/compactness of the system. To this end, new observability inequalities, trough Carleman estimates \cite{fu,LTY,LTZ}, are obtained in order to prove a stability inequality. For the reader's convenience we shall prove the gradient structure in Lemma \ref{lemma-gradient} below.   \qed    	
\end{remark}

\begin{remark} After the apparition of \cite{clt}, Blair, Smith and Sogge \cite{blair} proved new 
Strichartz type estimates for wave equations on manifolds with boundary. This provided new tools to study wave equations in bounded domains of $\mathbb{R}^3$ with quintic nonlinearities (rather than cubic). With respect to 
wave equations with locally distributed damping, Joly and Laurent \cite{joly-laurent} considered linear damping and sub-quintic nonlinearities ($|f(x,u)| \approx |u|^{5-\delta}$). Their results were presented in a formalism of Riemannian geometry, assuming that the control region $\omega$ satisfies (GCC) only. To establish existence of global attractors, they proved a proper unique continuation theorem based on the one by Robbiano and Zuily \cite{rz}, that requires $f=f(x,u)$ to be analytic. 
In the framework of \cite{joly-laurent}, regularity and fractal dimension of attractors are open questions. We refer the reader to, e.g. \cite{cel,conti,ksz,ma,mei,yang} for some recent works on attractors for wave equations with cubic and quintic nonlinearities. \qed
\end{remark}

\section{Uniformly bounded attractors} 

In this section we prove that global attractors $\mathcal{A}_{\ep}$ given by Theorem \ref{thm-attractor} are uniformly bounded with respect to $\ep \in [0,1]$. We recall that a 
mapping $\Psi : \mathcal{H} \to \mathbb{R}$ is called a strict Lyapunov function 
for a system $(\mathcal{H},S(t))$ if the functional $\Psi(S(t)z)$ is non-increasing with respect to $t\ge 0$, for any $z \in \mathcal{H}$. Moreover, if $\Psi(S(t)z) = z$ for all $t\ge 0$, then $z$ is a fixed point of $S(t)$. A dynamical system is called gradient if it possesses a strict Lyapunov function.

\begin{lemma}[Gradient system \cite{clt}] \label{lemma-gradient}
Suppose that hypotheses of Theorem \ref{thm-wp} hold. Then the total energy $\mathcal{E}_{\ep}(t)$ is 
a strict Lyapunov functional.     
\end{lemma}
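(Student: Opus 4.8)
The plan is to check the two defining properties of a strict Lyapunov functional in turn: that $t\mapsto\mathcal{E}_\ep(S_\ep(t)z)$ is non-increasing for every $z\in\mathcal{H}$, and that it is constant on $[0,\infty)$ only when $z$ is a stationary point of \eqref{P-ep}. For the monotonicity I would integrate \eqref{Energy-prime} to obtain
\[
\mathcal{E}_\ep(S_\ep(t)z)-\mathcal{E}_\ep(S_\ep(\tau)z)=-\int_\tau^t\int_\Omega a(x)\,g(\partial_t u)\,\partial_t u\,dx\,ds,\qquad 0\le\tau\le t,
\]
first for strong solutions ($z\in\mathcal{H}_1$) and then, by density of $\mathcal{H}_1$ in $\mathcal{H}$ together with the continuity estimate \eqref{S1} and the a priori bound of Remark \ref{rem-E0}, for an arbitrary $z\in\mathcal{H}$. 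Since $a\ge 0$ by \eqref{x2} and $g(s)s\ge 0$ by \eqref{g}, the right-hand side is $\le 0$, so $\mathcal{E}_\ep$ decays along trajectories.

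For the rigidity statement I would assume $\mathcal{E}_\ep(S_\ep(t)z)\equiv\mathcal{E}_\ep(z)$ on $[0,\infty)$. The identity above then forces $\int_0^\infty\int_\Omega a(x)\,g(\partial_t u)\,\partial_t u\,dx\,dt=0$, hence $a(x)\,g(\partial_t u)\,\partial_t u=0$ a.e. in $\Omega\times(0,\infty)$. The pointwise inequality I would use here is $g(s)s\ge m\,s^2$ for all $s\in\mathbb{R}$, which follows from $g(0)=0$ and $g'\ge m>0$ in \eqref{g} by the mean value theorem; together with \eqref{x2} this gives $m\,a_0\int_0^\infty\int_\omega|\partial_t u|^2\,dx\,dt\le 0$, so $\partial_t u=0$ a.e. on $\omega\times(0,\infty)$. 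In particular $\partial_t^2 u=0$ and $g(\partial_t u)=0$ there, so the damping is switched off on $\omega$, and differentiating the first equation of \eqref{P-ep} in time, $v:=\partial_t u$ solves
\[
\partial_t^2 v-\Delta v+a(x)\,g'(\partial_t u)\,v+f'(u)\,v=0 \quad\mbox{in } \Omega\times\mathbb{R}^+,\qquad v=0 \ \mbox{on } \partial\Omega\times\mathbb{R}^+,
\]
with $v\equiv 0$ on $\omega\times(0,\infty)$; note that \eqref{f1} makes $f'(u)$ grow at most quadratically, so the zero-order coefficients lie in $L^\infty_{\mathrm{loc}}(\mathbb{R}^+;L^3(\Omega))$, within reach of the available unique continuation theory.

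The next step is to invoke the Carleman-estimate based unique continuation theorems used in \cite{clt} (namely those of \cite{fu,toundykov}): since $\omega$ is a $\delta$-neighbourhood of the connected piece $\Gamma_C$ and hence satisfies (GCC), vanishing of $v$ on $\omega$ over a sufficiently long time window propagates to $v\equiv 0$ on $\Omega\times\mathbb{R}^+$. Consequently $u(t)\equiv u_0$ is time-independent, $u_1=\partial_t u(0)=0$, and $u_0$ solves $-\Delta u_0+f(u_0)=\ep h$; that is, $z=(u_0,0)\in\mathcal{N}_\ep$ and $S_\ep(t)z=z$ for all $t\ge 0$. Combined with the monotonicity, this establishes that $\mathcal{E}_\ep$ is a strict Lyapunov functional for $(\mathcal{H},S_\ep(t))$.

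I expect the main obstacle to be this last step. Making the unique continuation argument rigorous requires enough regularity to differentiate \eqref{P-ep} in time and to place $v$ in the function class covered by the Carleman estimates on a domain with boundary; for trajectories in the attractor this is guaranteed by the smoothing in Theorem \ref{thm-attractor}(3), and for a general $z\in\mathcal{H}$ one would argue as in \cite{clt}, where exactly this point (together with the observability/stability inequalities built from \cite{fu,toundykov}) is the technical heart of the proof. The monotonicity part, by contrast, is entirely routine.
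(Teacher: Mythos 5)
Your proposal follows essentially the same route as the paper's proof: monotonicity from the energy identity \eqref{Energy-prime}, then, when the energy is constant along a trajectory, vanishing of the damping functional, localization $\partial_t u=0$ on $\omega$, time-differentiation of the equation, and a unique continuation theorem from \cite{fu,toundykov} to conclude $\partial_t u\equiv 0$, so that $z=(u_0,0)$ is a fixed point. One correction, though: differentiating the damping term in time gives $a(x)\,g'(\partial_t u)\,\partial_t v$ (a first-order term, since $\partial_t[g(\partial_t u)]=g'(\partial_t u)\partial_t^2 u$), not $a(x)\,g'(\partial_t u)\,v$ as you wrote, and with that genuine first-order term the equation for $v$ is no longer of the pure potential form $\partial_t^2 v-\Delta v+q\,v=0$ to which the quoted unique continuation results are applied. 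The slip is harmless because the damping can be removed \emph{before} differentiating: since \eqref{g} gives $g(s)s\ge m s^2$ and $|g(s)|\le M|s|$, the identity $a\,g(\partial_t u)\,\partial_t u=0$ a.e. forces $\partial_t u=0$ wherever $a>0$ (not only on $\omega$), hence $a(x)g(\partial_t u)\equiv 0$ a.e. in $\Omega\times(0,T)$; thus $u$ solves the undamped equation and $v=\partial_t u$ satisfies exactly $\partial_t^2 v-\Delta v+f'(u)v=0$ with $v=0$ on $\partial\Omega$ and on $\omega\times(0,T)$, which is precisely the system the paper feeds into \cite[Theorem 2.2]{fu}. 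Apart from this point, your treatment (density to justify working with strong solutions, and the observation that \eqref{f1} keeps $f'(u)$ in the admissible class for the Carleman-based unique continuation) matches the paper's argument.
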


\begin{proof} 
We sketch a proof for the reader's convenience. Let us define 
\begin{equation} \label{Lyapunov}
\Psi_{\ep}(S_{\ep}(t)z) := \frac{1}{2} \|(u(t),\partial_t u(t))\|^2_{\mathcal{H}} 
+ \int_{\Omega} F(u(t))\,dx - \ep \int_{\Omega} h(x) u(t) \, dx,
\end{equation} 
where $(u(t),\partial_t u(t))$ is the solution of problem \eqref{P-ep} with initial data $z$. 
From \eqref{Energy-prime} we see that $\Psi_{\ep}(S_{\ep}v(t)z)$ is non-increasing. Now, if $\Psi_{\ep}(S_{\ep}(t)z)$ is stationary for some $z$, it follows that    
$$
\int_{\Omega} a(x)g(\partial_t u)\partial_t u \, dx = 0, \quad t \ge 0. 
$$ 
Using \eqref{g} we infer that 
$$
\int_{\omega} |\partial_t u|^2 \, dx =0  \;\; \mbox{and} \;\;  
\int_{\Omega} a(x) |g(\partial_t u)|^2 \, dx = 0, \quad t \ge 0.
$$
In particular, for any $T>0$,  
$$
\partial_t u = 0 \;\; \mbox{a.e. in} \;\; \omega \times (0,T) \quad \mbox{and} \quad a(x) g(\partial_t u) =0 \;\; \mbox{a.e. in} \;\; \Omega \times (0,T).  
$$
Therefore, the semiflow $S_{\ep}(t)z=(u(t),\partial_t u(t))$ is 
a solution of 
\begin{eqnarray*} 
\left\{
\begin{array}{ll}
\partial_t^2 u - \Delta u + f(u) = \varepsilon h(x) \;\; \mbox{in} \;\; \Omega \times (0,T), \smallskip \\
u=0 \;\; \mbox{on} \;\; \partial \Omega \times (0,T), \smallskip \\
\partial_t u = 0 \;\; \mbox{in} \;\; \omega \times (0,T), \smallskip \\
u(0) = u_0, \;\; \partial_t u(0) = u_1 \;\; \mbox{in} \;\; \Omega \times (0,T). 
\end{array} 
\right. 
\end{eqnarray*}
By density arguments, we can assume $(u,\partial_t u)$ is a strong solution in $\mathcal{H}_1$ and 
then, differentiating the equation and putting $w = \partial_t u$,  
we obtain 
\begin{eqnarray} \label{dual}
\left\{
\begin{array}{ll}
\partial_t^2 w - \Delta w + f'(u) w = 0 \;\; \mbox{in} \;\; \Omega \times (0,T), \smallskip \\
w = 0 \;\; \mbox{on} \;\; \partial \Omega \times (0,T), \smallskip \\
w = 0 \;\; \mbox{in} \;\; \omega \times (0,T).
\end{array} 
\right. 
\end{eqnarray}
Upon \eqref{dual} we apply a unique continuation theorem which is compatible with the geometric conditions of $\omega$ (e.g. \cite[Theorem 2.2]{fu}) to conclude that 
$w = \partial_t u=0$ over $\Omega \times \mathbb{R}^{+}$. Therefore $z=(u_0,0)$ is a fixed point of $S_{\ep}(t)$. 
\end{proof}

\begin{theorem} \label{thm-uniform}
Under the hypotheses of Theorem \ref{thm-attractor}, there exists a constant $R >0$ such that 
\begin{equation} \label{uniform-bound}
\mathcal{A}_{\ep} \subset B(0,R), \;\; \forall \, \ep \in [0,1].
\end{equation}
In addition, there exists a constant $C_{R}>0$ such that 
\begin{equation} \label{smooth2}
\|(\partial_t u(t),\partial^2_t u(t))\|_{\mathcal{H}} \le C_{R}, \;\; \forall \ep \in [0,1], \;\ t \in \mathbb{R},
\end{equation}
for any full trajectory $\{(u(t),\partial_t u(t)) \, | \, t \in \mathbb{R}\}$  that belongs to $\mathcal{A}_{\ep}$.  
\end{theorem}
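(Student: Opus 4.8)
The plan is to exploit the gradient structure established in Lemma \ref{lemma-gradient} together with the characterization $\mathcal{A}_{\ep} = \mathbb{W}^u(\mathcal{N}_{\ep})$ from Theorem \ref{thm-attractor}(2). The point is that, for a gradient system with compact global attractor, the value of the Lyapunov functional on any point of the attractor is dominated by its value on the set of equilibria; hence everything reduces to an $\ep$-uniform a priori bound for $\mathcal{N}_{\ep}$, and the rest is the standard gradient-system argument combined with the energy inequalities \eqref{desiEnergy}.

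First I would bound $\mathcal{N}_{\ep}$ uniformly in $\ep$. If $(u,\partial_t u)\in\mathcal{N}_{\ep}$ then $\partial_t u = 0$, and since $g(0)=0$ the profile $u$ solves the elliptic problem $-\Delta u + f(u) = \ep h$ in $\Omega$ with $u|_{\partial\Omega}=0$. Multiplying by $u$ and integrating gives $\|\nabla u\|_2^2 + \int_\Omega f(u)u\,dx = \ep\int_\Omega h u\,dx$. As in the proof of the Lemma, assumption \eqref{f2} supplies $\delta>0$ and $C_\delta>0$, depending only on $f$, with $f(s)s \ge -(\lambda_1-\delta)s^2 - C_\delta$; combining this with the Poincar\'e inequality $\lambda_1\|u\|_2^2\le\|\nabla u\|_2^2$ and with $\ep\le 1$ yields
$$
\frac{\delta}{\lambda_1}\,\|\nabla u\|_2^2 \le \frac{1}{\sqrt{\lambda_1}}\,\|h\|_2\,\|\nabla u\|_2 + C_\delta|\Omega|,
$$
and therefore $\|(u,\partial_t u)\|_{\mathcal{H}} = \|\nabla u\|_2 \le \rho_0$ with $\rho_0$ independent of $\ep\in[0,1]$. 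Feeding this into the upper bound in \eqref{desiEnergy} at the point $(u,0)$ gives $\sup_{\mathcal{N}_{\ep}}\mathcal{E}_{\ep} \le c_2(1+\rho_0^4)=:R_0$, again uniformly in $\ep$.

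Next I would propagate this bound over the whole attractor. Fix $\ep\in[0,1]$ and $z\in\mathcal{A}_{\ep}$; by $\mathcal{A}_{\ep}=\mathbb{W}^u(\mathcal{N}_{\ep})$ there is a full trajectory $\gamma(t)=(u(t),\partial_t u(t))$ through $z=\gamma(0)$ whose $\alpha$-limit set is a nonempty subset of $\mathcal{N}_{\ep}$. Since $\mathcal{E}_{\ep}$ is continuous, bounded below along $\gamma$ by \eqref{desiEnergy}, and non-increasing in $t$ by \eqref{Energy-prime}, the limit $\lim_{t\to-\infty}\mathcal{E}_{\ep}(\gamma(t))$ exists and equals $\mathcal{E}_{\ep}(z^*)$ for any $z^*$ in the $\alpha$-limit set; hence $\mathcal{E}_{\ep}(z)\le \mathcal{E}_{\ep}(z^*)\le R_0$. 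The lower bound in \eqref{desiEnergy} then gives $\nu_1\|z\|_{\mathcal{H}}^2 - c_1 \le R_0$, i.e. $\|z\|_{\mathcal{H}}\le R:=\sqrt{(R_0+c_1)/\nu_1}$, which is \eqref{uniform-bound}. Finally, estimate \eqref{smooth2} follows at once from Theorem \ref{thm-attractor}(3): since $\|\mathcal{A}_{\ep}\|_{\mathcal{H}}\le R$ for every $\ep$ and the function $\mathcal{Q}$ there is independent of $\ep$ and of the trajectory, one may take $C_R=\mathcal{Q}(R)$.

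The main obstacle will be the $\ep$-uniform control of the equilibria in the second step: one must check carefully that the constants $\delta$, $C_\delta$ and all constants entering the elliptic estimate depend only on $f$, $h$, $\lambda_1$ and $\Omega$, and that the critical growth of $F$ is absorbed through the Sobolev embedding $H^1_0(\Omega)\hookrightarrow L^6(\Omega)$ (equivalently, through the upper bound in \eqref{desiEnergy}) rather than destroying the bound. Everything else is a routine application of the gradient structure and the energy identities already recorded.
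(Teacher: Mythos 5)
Your proposal is correct and follows essentially the same route as the paper: an $\ep$-uniform elliptic bound on the equilibria $\mathcal{N}_{\ep}$, transfer of that bound to $\mathcal{A}_{\ep}$ through the Lyapunov functional and the two-sided energy inequalities \eqref{desiEnergy}, and then \eqref{smooth2} via $C_R=\mathcal{Q}(R)$ from Theorem \ref{thm-attractor}(3). The only difference is cosmetic: where the paper cites the gradient-system fact $\sup_{\mathcal{A}_{\ep}}\Psi_{\ep}\le\sup_{\mathcal{N}_{\ep}}\Psi_{\ep}$ from \cite[Remark 7.5.8]{CL-yellow}, you re-derive it directly from $\mathcal{A}_{\ep}=\mathbb{W}^u(\mathcal{N}_{\ep})$ and the monotonicity of $\mathcal{E}_{\ep}$ along backward trajectories, which is a valid in-place proof of the same statement.
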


\begin{proof} From Lemma \ref{lemma-gradient} we know that system $(\mathcal{H}, S_{\ep}(t))$ is gradient 
by taking the energy functional as a Lyapunov function \eqref{Lyapunov}.  
	
Next, we recall a property of gradient systems that asserts the following: if $\mathcal{A}$ is the global attractor of a gradient system with Lyapunov function $\Psi$, and bounded set of stationary points $\mathcal{N}$, then  
$\sup \{\Psi(z) \, | \, z \in \mathcal{A} \} \le \sup \{ \Psi(z) \, | \, z \in \mathcal{N}\}$. See e.g. \cite[Remark 7.5.8]{CL-yellow}.  
In our context, we have 
\begin{equation} \label{des}
\sup\{\Psi_{\ep}(z) \,| \, z \in \mathcal{A}_{\varepsilon} \} \le \sup \{ \Psi_{\ep}(z) \, | \, z \in \mathcal{N}_{\ep} \}, 
\end{equation}
where $\mathcal{N}_{\ep}$ denotes the set of fixed points of $S_{\ep}(t)$. 

Now, we observe that $\mathcal{N}_{\ep}$ must be uniformly bounded. Indeed, if $z \in \mathcal{N}_{\ep}$ then $z=(u,0)$ where $u$ 
is a weak solution of the stationary problem 
$$
-\Delta u + f(u) = \ep h(x) \;\; \mbox{in} \;\; H^1_0(\Omega). 
$$
Using assumption \eqref{f2} and Poincar\'e inequality we obtain a constant $C>0$ such that 
$$
\| \nabla u \|_{2}^2 = \int_{\Omega} (\ep h - f(u) ) u \, dx \le \frac{1}{2} \| \nabla u \|_{2}^2 + C (1 + \| h\|_2^2).
$$
Therefore $\| \nabla u\|_{2} \le C_{fh}$ for some constant $C_{fh}>0$, independently of $\ep \in [0,1]$. 
This shows that $\mathcal{N}_{\ep}$ is uniformly bounded with respect to $\ep$. In particular, taking into account the definition of $\Psi_{\ep}$ in \eqref{Lyapunov} and the second inequality in \eqref{desiEnergy},  
we see that 
$$ 
\Psi_{\ep}(z) \le c_2 (1 + C_{fh}^4), \;\; \forall \, z \in \mathcal{N}_{\ep}, \;\; \forall \, \ep \in [0,1].  
$$
On the other hand, using the first inequality in \eqref{desiEnergy}, we see that 
$$ 
\nu_1 \| z \|_{\mathcal{H}}^2 - c_1 \le \Psi_{\ep}(z), \;\; \forall \, z \in \mathcal{A}_{\ep}, 
\;\; \forall \, \ep \in [0,1].  
$$
Combining last two estimates with \eqref{des} we obtain the uniform bound \eqref{uniform-bound} by taking 
$R^2 = \nu^{-1} (1+ c_1 +c_2 + c_2 C_{fh}^4)$. Finally, taking $C_{R} = \mathcal{Q}(R)$, we see from \eqref{smooth} that estimate \eqref{smooth2} holds.  
\end{proof}

\begin{remark} \label{rem-absorb}
We note that from \eqref{uniform-bound}, taking $R_0>R$, the ball $\mathcal{B} = B(0,R_0)$ is a bounded absorbing set of $(\mathcal{H},S_{\ep}(t))$, uniformly on $\ep$. Given a bounded set $B \subset \mathcal{H}$, there exists a entrance time $t_B>0$ such that 
$S_{\ep}(t)B \subset \mathcal{B}$, if $t \ge t_B$, for any $\ep \in [0,1]$.   \qed
\end{remark}

\section{On the continuity of attractors}
Let $\mathcal{A}_{\lambda}$ be a family of global attractors for a system $(X,S_{\lambda}(t))$, 
where $\lambda$ belongs to a complete metric space $\Lambda$. We say that 
$\mathcal{A}_{\lambda}$ is upper semicontinuous on $\lambda_0 \in \Lambda$ if 
$$
\lim_{\lambda \to \lambda_0} {\rm dist}_X(\mathcal{A}_{\lambda},\mathcal{A}_{\lambda_0}) = 0,  
$$
where ${\rm dist}_X(A,B) = \sup_{a \in A} \inf_{b \in B} d(a,b)$ denotes the Hausdorff semi-distance in $X$. 
Analogously, if we commutate $\mathcal{A}_{\lambda}$ and $\mathcal{A}_{\lambda_0}$ 
in the above limit, then we say that 
$\mathcal{A}_{\lambda}$ is lower semicontinuous on $\lambda_0 \in \Lambda$. 
Then $\mathcal{A}_{\lambda}$ is continuous on $\lambda_0 \in \Lambda$ if  
$$
\lim_{\lambda \to \lambda_0} {\rm d}_{X}(\mathcal{A}_{\lambda},\mathcal{A}_{\lambda_0}) = 0,  
$$
where ${\rm d}_{X}(A,B)= \max \big\{{\rm dist}_X(A,B),{\rm dist}_X(B,A) \big\}$ is the Hausdorff metric in $X$.

While it is more or less standard to check upper semicontinuity of attractors for a large class of dissipative systems, the proof of lower semicontinuity is much more involving (cf. \cite{hr1988,hr1989}). 

We shall use a recent result by Hoang, Olson and Robinson \cite{hoang} on   
the continuity of attractors with respect to a parameter. Their results extend earlier ones by Babin and Pilyugin \cite{Babin-Pilyugin}. 
Accordingly, let $S_{\lambda}(t)$ be a family of parametrized semigroups defined on $X$, 
with $\lambda$ in a complete metric space $\Lambda$. Assume that 
\begin{itemize}
\item[$(c1)$] The system $(X,S_{\lambda}(t))$ has a global attractor $\mathcal{A}_{\lambda}$ for every $\lambda \in \Lambda$,

\item[$(c2)$] There exists a bounded set $\mathcal{B} \subset X$ such that 
$\mathcal{A}_\lambda \subset \mathcal{B}$ for every $\lambda \in \Lambda$,

\item[$(c3)$] For $t>0$, $S_{\lambda}(t)x$ is continuous in $\lambda$, uniformly for $x$ in bounded subsets of $X$.
\end{itemize}
Then $\mathcal{A}_{\lambda}$ is continuous on all $\lambda \in J$ where $J$ is a ``residual'' set dense in $\Lambda$. See \cite[Theorem 5.2]{hoang}. 

\begin{theorem}[Continuity on a residual dense subset] \label{thm-lower}
In the context of Theorem \ref{thm-attractor} there exists a set $J$ dense in $[0,1]$ 
such that $\mathcal{A}_{\ep}$ is continuous with respect to any parameter $\ep_0 \in J$,  
that is,   
\begin{equation} \label{J}
\lim_{\ep \to \ep_0} {\rm d}_\mathcal{H} (\mathcal{A}_{\ep},\mathcal{A}_{\ep_0}) = 0, 
\;\; \forall \, \ep_0 \in J.
\end{equation}
\end{theorem}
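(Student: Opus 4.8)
The plan is to verify that the three hypotheses $(c1)$, $(c2)$ and $(c3)$ from \cite[Theorem 5.2]{hoang} hold for the family $S_{\ep}(t)$ with parameter space $\Lambda = [0,1]$, which is a complete metric space, and then simply invoke that theorem to obtain the residual dense set $J$ on which continuity \eqref{J} holds. Condition $(c1)$ is immediate from Theorem \ref{thm-attractor}, which gives a global attractor $\mathcal{A}_{\ep}$ for every $\ep \in [0,1]$. Condition $(c2)$ is precisely the uniform boundedness established in Theorem \ref{thm-uniform}: taking $\mathcal{B} = B(0,R_0)$ with $R_0 > R$ as in Remark \ref{rem-absorb}, we have $\mathcal{A}_{\ep} \subset \mathcal{B}$ for all $\ep \in [0,1]$. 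So the only substantive point to check is $(c3)$.

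For $(c3)$, I would fix $t>0$, fix a bounded set $B \subset \mathcal{H}$, fix $z \in B$ and parameters $\ep, \ep_0 \in [0,1]$, and estimate $\|S_{\ep}(t)z - S_{\ep_0}(t)z\|_{\mathcal{H}}$ uniformly in $z \in B$. Writing $(u,\partial_t u) = S_{\ep}(t)z$ and $(\bar u, \partial_t \bar u) = S_{\ep_0}(t)z$, the difference $w = u - \bar u$ solves, on $\Omega \times \mathbb{R}^+$, the equation
\begin{equation*}
\partial_t^2 w - \Delta w + a(x)\big(g(\partial_t u) - g(\partial_t \bar u)\big) + f(u) - f(\bar u) = (\ep - \ep_0) h(x),
\end{equation*}
with $w = 0$ on $\partial\Omega$ and $w(0) = 0$, $\partial_t w(0) = 0$. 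Multiplying by $\partial_t w$ and integrating over $\Omega$ gives, after using the monotonicity $g(s)s \ge 0$ together with $g' \le M$ (so the damping difference term is controlled), the local Lipschitz bound on $f$ from $H^1_0(\Omega)$ to $L^2(\Omega)$ (valid on the ball $B$ and on the absorbing set, and here uniform over trajectories starting in $B$ on $[0,t]$ by Remark \ref{rem-E0}), and the bound $\big|(\ep-\ep_0)\int_\Omega h \partial_t w\,dx\big| \le C|\ep-\ep_0|^2 + \tfrac14\|\partial_t w\|_2^2$, an inequality of the form
\begin{equation*}
\frac{d}{dt}\|(w,\partial_t w)\|_{\mathcal{H}}^2 \le C_{B,t}\,\|(w,\partial_t w)\|_{\mathcal{H}}^2 + C_{B,t}\,|\ep - \ep_0|^2,
\end{equation*}
with $C_{B,t}$ depending only on $B$, $t$ and the data $f,g,h$ (through Theorem \ref{thm-wp} part (3) and Remark \ref{rem-E0}, trajectories from $B$ stay in a fixed bounded set on $[0,t]$, making the Lipschitz constants uniform). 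Grönwall's inequality together with $w(0) = \partial_t w(0) = 0$ then yields $\|S_{\ep}(t)z - S_{\ep_0}(t)z\|_{\mathcal{H}}^2 \le C_{B,t}\,|\ep-\ep_0|^2 e^{C_{B,t} t}$, which is exactly continuity in $\ep$ uniform for $z \in B$, i.e. $(c3)$.

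With $(c1)$, $(c2)$, $(c3)$ verified, \cite[Theorem 5.2]{hoang} applies and delivers a residual (hence dense, since $[0,1]$ is a complete metric space and therefore a Baire space) set $J \subset [0,1]$ such that $\mathcal{A}_{\ep}$ is continuous at every $\ep_0 \in J$ in the Hausdorff metric $\mathrm{d}_{\mathcal{H}}$, which is \eqref{J}. The main obstacle is the verification of $(c3)$, and within it the genuinely delicate point is that the Lipschitz constant for the nonlinearity $f$ — which is only \emph{locally} Lipschitz from $H^1_0$ to $L^2$ because of the critical cubic growth — must be taken uniform over all trajectories $S_\ep(t)z$ with $z \in B$, $\ep \in [0,1]$, $t \in [0,T]$; this is where Remark \ref{rem-E0} (uniform-in-$\ep$ a priori bound on $\|S_\ep(t)z\|_{\mathcal{H}}$ in terms of $\|z\|_{\mathcal{H}}$) is essential, since it confines all these trajectories to one fixed bounded subset of $\mathcal{H}$ and thereby makes $C_{B,t}$ genuinely independent of $\ep$.
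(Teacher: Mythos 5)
Your proposal is correct and follows essentially the same route as the paper: verify $(c1)$ via Theorem \ref{thm-attractor}, $(c2)$ via the uniform bound of Theorem \ref{thm-uniform}, and $(c3)$ by an energy--Gr\"onwall estimate for the difference of two trajectories with the same data and different parameters, then invoke \cite[Theorem 5.2]{hoang}. The only cosmetic difference is that you absorb the damping-difference term using $g'\le M$ whereas the paper drops it by the monotonicity of $g$; both work equally well.
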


\begin{proof}
We shall apply \cite[Theorem 5.2]{hoang} to the context of Theorem \ref{thm-attractor}. 
Then above assumption $(c1)$ holds. The assumption $(c2)$ also holds because 
of the uniform bound \eqref{uniform-bound} in Theorem \ref{thm-uniform}. 
It remains to prove condition $(c3)$. 

Let $B$ be a bounded set of $\mathcal{H}$. Given $\ep_1,\ep_2 \in [0,1]$ and $z \in B$, 
let us denote 
$$
S_{\ep_1}(t)z = (u(t),\partial_t u(t)) \;\; \mbox{and} \;\; 
S_{\ep_2}(t)z = (v(t),\partial_t v(t)). 
$$
Then $w=u-v$ is a solution of 
$$ 
\left \{
\begin{array}{ll}
\partial^2_{t} w-\Delta w + a (g(\partial_t u)-g(\partial_t v)) 
+ f(u)-f(v) = (\ep_1-\ep_2) h \;\; \mbox{in} \;\; \Omega \times \mathbb{R}^{+}, \smallskip \\
w=0 \;\; \mbox{on} \;\; \partial \Omega \times \mathbb{R}^{+}, \smallskip \\
w(0) = \partial_t w(0) = 0 \;\; \mbox{in} \;\; \Omega, 
\end{array} 
\right. 
$$ 
and consequently we have  
\begin{align} \label{geral}
\frac{1}{2}\frac{d}{dt} \|(w,\partial_t w)\|^2_{\mathcal{H}} 
= & \int_{\Omega}(f(v) - f(u) ) \partial_t w \, dx 
- \int_{\Omega} a (g(\partial_t u)-g(\partial_t v)) \partial_t w \, dx \nonumber \\ 
& + (\varepsilon_1-\varepsilon_2) \int_{\Omega} h\partial_t w \, dx.
\end{align}
By assumption \eqref{f1} we know that $|f'(s)| \le C (1 + |s|^2)$ for some $C >0$. Then,	
\begin{align*}
\int_{\Omega}(f(v)-f(u)) \partial_t w \, dx  
& \le C (1 + \| u \|_6^2 + \|v\|_6^2 ) \| w \|_6 \|\partial_t w\|_2 \\ 
& \le C (1 + \| \nabla u \|_2^4 + \| \nabla v\|_2^4 ) \| \nabla w \|_2^2 +  \|\partial_t w\|_2^2 . 
\end{align*}
From Remark \ref{rem-E0}, $\| \nabla u \|_2^4 + \| \nabla v\|_2^4$ are uniformly bounded, and hence there exists a constant $C_{B}>0$ such that  
$$
\int_{\Omega}(f(v)-f(u)) \partial_t w \, dx \le C_{B} \| (w, \partial_t w) \|_{\mathcal{H}}^2.  
$$
The assumptions \eqref{g} and \eqref{x2} imply that 
$$
- \int_{\Omega} a(g(\partial_t u)-g(\partial_t v)) \partial_t w \, dx \le 0.
$$
Also,  
$$
(\ep_1-\ep_2) \int_{\Omega} h \partial_t w \, dx \le |\ep_1-\ep_2|^2 \|h\|_2^2 + 4 \|(w,\partial_t w)\|^2_{\mathcal{H}}.
$$
Inserting above three estimates into \eqref{geral} we obtain  	
$$
\frac{d}{dt} \|(w,\partial_t w)\|^2_{\mathcal{H}} \le C_0 
\|(w,\partial_t w)\|^2_{\mathcal{H}} + |\ep_1-\ep_2|^2 \|h\|_2^2,
$$
where $C_0 = C_0(B,f,h)>0$ is independent of $\ep_1,\ep_2$. Then Gronwall inequality yields 
$$
\|(w,\partial_t w)\|^2_{\mathcal{H}} 
 \le e^{C_0 t} \|(w(0),\partial_t w(0)\|_{\mathcal{H}}^2 + C_0 (e^t-1) |\ep_1-\ep_2|^2 \|h\|_2^2 .
$$
Since $\|(w(0),\partial_t w(0)\|_{\mathcal{H}}=0$, we finally see that 
$$
\| S_{\ep_1}(t)z - S_{\ep_2}(t)z \|_{\mathcal{H}}  
\le \sqrt{C_0 (e^t-1)}\|h\|_2 |\ep_1-\ep_2|, \;\; t >0, 
$$
which shows $(c3)$. As a conclusion, by applying \cite[Theorem 5.2]{hoang}, there exists a dense set $J \subset [0,1]$ 
such that \eqref{J} holds. This ends the proof. 
\end{proof}

\begin{theorem}[Upper semicontinuity] \label{thm-upper} 
In the context of Theorem \ref{thm-attractor} the family of global attractors $\mathcal{A}_{\ep}$ 
is upper semicontinuous with respect to parameters $\ep$ in $[0,1]$, that is,
\begin{equation} \label{J2}
\lim_{\ep \to \ep_0} {\rm dist}_{\mathcal{H}}(\mathcal{A}_{\ep},\mathcal{A}_{\ep_0}) = \ep_0, 
\;\; \forall \, \ep_0 \in [0,1].
\end{equation}
\end{theorem}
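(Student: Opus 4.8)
The plan is to obtain upper semicontinuity from the ingredients already assembled, namely the uniform bound $\mathcal{A}_{\ep}\subset B(0,R)$ of Theorem \ref{thm-uniform} (estimate \eqref{uniform-bound}), the uniform absorbing ball $\mathcal{B}=B(0,R_0)$ with $R_0>R$ of Remark \ref{rem-absorb}, and the Gronwall-type parameter-continuity estimate established while verifying condition $(c3)$ in the proof of Theorem \ref{thm-lower}. With these in hand the argument is the classical one for upper semicontinuity of attractors (cf. \cite{hr1988,hr1989,babin-vishik}); in fact one could simply quote an abstract theorem, but I prefer to spell it out, and \eqref{J2} should of course read with right-hand side $0$.

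First I would argue by contradiction. Suppose \eqref{J2} fails at some $\ep_0\in[0,1]$; then there are $\eta>0$, a sequence $\ep_n\to\ep_0$ in $[0,1]$, and points $z_n\in\mathcal{A}_{\ep_n}$ with ${\rm dist}_{\mathcal{H}}(z_n,\mathcal{A}_{\ep_0})\ge\eta$ for all $n$. Since $\mathcal{A}_{\ep_0}$ attracts the bounded set $\mathcal{B}$, I fix \emph{once and for all} a time $T>0$ such that
$$
{\rm dist}_{\mathcal{H}}\big(S_{\ep_0}(T)\mathcal{B},\,\mathcal{A}_{\ep_0}\big)<\tfrac{\eta}{2}.
$$
By \eqref{uniform-bound} and the choice $R_0>R$ we have $\mathcal{A}_{\ep_n}\subset\mathcal{B}$, and since $\mathcal{A}_{\ep_n}$ is fully invariant, $\mathcal{A}_{\ep_n}=S_{\ep_n}(T)\mathcal{A}_{\ep_n}$, so there exists $y_n\in\mathcal{A}_{\ep_n}\subset\mathcal{B}$ with $z_n=S_{\ep_n}(T)y_n$.

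Then I would split, by the triangle inequality,
$$
{\rm dist}_{\mathcal{H}}(z_n,\mathcal{A}_{\ep_0})\le \big\|S_{\ep_n}(T)y_n-S_{\ep_0}(T)y_n\big\|_{\mathcal{H}}+{\rm dist}_{\mathcal{H}}\big(S_{\ep_0}(T)y_n,\,\mathcal{A}_{\ep_0}\big).
$$
The second term is $<\eta/2$ by the choice of $T$, because $y_n\in\mathcal{B}$. For the first term, the estimate obtained when checking $(c3)$ in the proof of Theorem \ref{thm-lower}, applied with the bounded set $\mathcal{B}$ and the corresponding constant $C_0=C_0(\mathcal{B},f,h)$, gives
$$
\big\|S_{\ep_n}(T)y_n-S_{\ep_0}(T)y_n\big\|_{\mathcal{H}}\le \sqrt{C_0\,(e^{T}-1)}\,\|h\|_2\,|\ep_n-\ep_0|,
$$
which tends to $0$ as $n\to\infty$ since $T$ is now \emph{fixed}. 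Hence for $n$ large the right-hand side of the split is $<\eta$, contradicting ${\rm dist}_{\mathcal{H}}(z_n,\mathcal{A}_{\ep_0})\ge\eta$, and \eqref{J2} follows.

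The only genuine point of care — and the main, if mild, obstacle — is the order of quantifiers: the time $T$ must be selected first from the attraction property of $\mathcal{A}_{\ep_0}$ (it depends on $\eta$), and only afterwards may $n$ be sent to infinity, so that the prefactor $\sqrt{C_0(e^{T}-1)}\,\|h\|_2$ stays bounded while $|\ep_n-\ep_0|\to 0$. All the substantial analytic content — the uniform radius $R$, the uniform absorbing ball, and the continuity-in-$\ep$ estimate — has already been established in Theorems \ref{thm-uniform} and \ref{thm-lower}, so no new PDE estimates are needed and the proof is essentially abstract.
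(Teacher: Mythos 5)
Your proof is correct (including your reading of \eqref{J2} with right-hand side $0$ rather than $\ep_0$, which is indeed a typo), but it follows a genuinely different route from the paper. The paper also argues by contradiction, but then exploits the characterization $\mathcal{A}_{\ep}=\{z(0)\,|\,z \text{ bounded full trajectory}\}$: it takes full trajectories $z^n\subset\mathcal{A}_{\ep^n}$ through the points $z_0^n$, uses the uniform regularity estimate \eqref{smooth2} of Theorem \ref{thm-uniform} to get boundedness in $W^{1,\infty}(\mathbb{R},\mathcal{H}_1)$, extracts by compactness a limit $(u,\partial_t u)$ in $C([-T,T];\mathcal{H})$ for all $T$, and passes to the limit in the equation (reusing the $(c3)$-type estimates) to conclude the limit is a bounded full trajectory for $\ep_0$, so $z(0)\in\mathcal{A}_{\ep_0}$, contradicting the separation. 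Your argument instead is the classical ``fixed finite time'' scheme: choose $T$ from the attraction of $\mathcal{A}_{\ep_0}$ on the uniform absorbing ball $\mathcal{B}$, write $z_n=S_{\ep_n}(T)y_n$ by invariance with $y_n\in\mathcal{B}$, and split by the triangle inequality, killing the first term with the Gronwall parameter-continuity estimate from the proof of Theorem \ref{thm-lower} at the fixed time $T$. This buys simplicity: you need only the uniform bound \eqref{uniform-bound}, the attraction property, and $(c3)$ --- no compactness, no $\mathcal{H}_1$ regularity of attractors, and no passage to the limit in the PDE --- and your handling of the quantifier order ($T$ fixed before $n\to\infty$) is exactly the point that makes it work. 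The paper's trajectory-limit argument is heavier but stays closer to the Hale--Raugel framework it cites and reuses machinery (the regular, uniformly bounded full trajectories) that it has already developed for other purposes.
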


\begin{proof} 
We argue by contradiction following \cite{hr1988}. 
Suppose that $\mathcal{A}_{\ep}$ is not upper semicontinuous at $\ep_0 \in [0,1]$. 
Then from \eqref{J2} there exists $\delta>0$ and sequences $\ep^n \to \ep_0$ 
and $z_0^n \in \mathcal{A}_{\ep^n}$ such that
\begin{equation} \label{J3} 
\inf_{y \in \mathcal{A}_{\ep_0}}\|z_0^n-y\|_{\mathcal{H}} \ge \delta, \; \; \forall \, n \in \mathbb{N}.
\end{equation}
Since the global attractors $\mathcal{A}_{\ep}$ are also characterized as  
$$
\mathcal{A}_{\ep} = \{z(0) \, | \, z \; \mbox{is a bounded full trajectory of $S_{\ep}(t)$} \},
$$
let $z^n = (u_n,\partial_t u_n)$ be a bounded full trajectory of $\mathcal{A}_{\ep^n}$ such that $z^n(0)=z_0^n$. 
Then by \eqref{smooth2} in Theorem \ref{thm-uniform}, we see that
\begin{equation} \label{conver1}
(u_n,\partial_t u_n) \;\; \mbox{is bounded in} \;\; W^{1,\infty}(\mathbb{R}, \mathcal{H}_1).
\end{equation}
From classical compactness arguments, we deduce the existence of a pair $(u,\partial_t u) \in  C(\mathbb{R};\mathcal{H}_1)$, 
such that, up to a subsequence,
\begin{equation} \label{conver2}
(u_n,\partial_t u_n) \to (u,\partial_t u) \;\; \mbox{ in } \;\; C([-T,T];\mathcal{H}), \;\; \forall \, T>0.
\end{equation}
Moreover, by (\ref{conver1}) and (\ref{conver2}), it follows that 
$$
\sup_{t \in \mathbb{R}}\| (u(t),\partial_t u(t)) \|_{\mathcal{H}} < \infty.
$$
We claim that $z=(u,\partial_t u)$ is a bounded full trajectory of the limiting semi-flow $S_{\ep_0}(t)$. 
Now, it is enough to show that $z$ is a full bounded trajectory for the problem \eqref{P-ep} with $\ep = \ep_0$, that is, 
\begin{equation} \label{eq}
\partial^2_{t} u - \Delta u + a(x)g(\partial_t u) + f(u) = \ep_0 h(x), \;\; \mbox{a.e.} \;\; t \in \mathbb{R}.
\end{equation}
Indeed, since $(u_n,\partial_t u_n)$ satisfies the equation
\begin{equation} \label{eq2}
\partial^2_{t} u_n - \Delta u_n + a(x)g(\partial_t u_n) + f(u_n) = \ep^n h(x), \;\; n \in \mathbb{N},
\end{equation}
we can proceed as in the verification of $(c3)$ in Theorem \ref{thm-lower} to conclude that  
(\ref{eq}) is the limit of \eqref{eq2} as $n \to \infty$. This is possible because, for any $n \in \mathbb{N}$, the control conditions in the damping region $\omega$ remains the same. Therefore 
$$
z(0) \in \mathcal{A}_{\ep_0} \;\; \mbox{and} \;\; \lim_{n \to \infty} \| z_0^n - z(0) \|_{\mathcal{H}} = 0, 
$$
which contradicts \eqref{J3}. 
\end{proof}

\section{Generalized exponential attractor} \label{sec-exp}

The objective of this section is to prove the existence of a generalized fractal exponential attractor for the dynamic system associated with problem \eqref{P-ep}. Let $X$ be a Hilbert space. 
Formally, cf. \cite[Definition 7.4.4]{CL-yellow}, a generalized (fractal) exponential attractor for a system 
$(X,S(t))$ is a compact set $\mathcal{A}^{\rm exp} \subset X$ that attracts exponentially bounded sets of $X$, is forward invariant, and have finite fractal dimension in a larger space ${\widetilde X} \supseteq X$. Our result reads as follows.   

\begin{theorem} \label{thm-exp}
Under the hypotheses of Theorem \ref{thm-attractor}, the system $(\mathcal{H},S_{\ep}(t))$ associated to problem \eqref{P-ep} possesses a generalized exponential attractor. 
\end{theorem}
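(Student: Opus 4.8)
The plan is to apply the abstract criterion for generalized exponential attractors (e.g. \cite[Theorem 7.9.6 / Proposition 7.9.4]{CL-yellow}): if the system $(\mathcal{H},S_{\ep}(t))$ has a bounded absorbing set and, on that absorbing set, satisfies a smoothing-type quasi-stability inequality of the form
$$
\|S_{\ep}(t_*)z^1 - S_{\ep}(t_*)z^2\|_{\mathcal{H}} \le \eta \|z^1-z^2\|_{\mathcal{H}} + C_* \| z^1 - z^2 \|_{\widetilde{\mathcal{H}}}, \quad \eta < 1,
$$
for some fixed $t_* > 0$ and a compact seminorm coming from a larger space $\widetilde{\mathcal{H}} \supseteq \mathcal{H}$ (here one takes $\widetilde{\mathcal{H}} = L^2(\Omega) \times H^{-1}(\Omega)$ or $\mathcal{H}_{-1}$), then a generalized exponential attractor exists. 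The absorbing set is already provided by Remark \ref{rem-absorb}, so the whole matter reduces to establishing the quasi-stability estimate and checking the mild Hölder-in-time continuity of $t \mapsto S_{\ep}(t)z$ that the abstract theorem also requires.

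The key steps, in order, are as follows. First, fix the uniformly absorbing ball $\mathcal{B} = B(0,R_0)$ from Remark \ref{rem-absorb} and restrict attention to trajectories starting in $\mathcal{B}$; by the semigroup property it suffices to work on the forward-invariant bounded absorbing set obtained by closing $\bigcup_{t\ge t_{\mathcal{B}}} S_{\ep}(t)\mathcal{B}$. Second, for two such trajectories write $w = u - v$ as in the proof of Theorem \ref{thm-lower} (with the same $\ep$, so the forcing terms cancel), and multiply the difference equation by $\partial_t w$ to get the energy identity \eqref{geral} with the $h$-term absent. The damping difference term has a favorable sign by \eqref{g}, \eqref{x2}, and the nonlinear term is controlled, using \eqref{f1} and the uniform bound on $\|\nabla u\|_2,\|\nabla v\|_2$, by $C_{\mathcal{B}}\|(w,\partial_t w)\|_{\mathcal{H}}\|w\|_{L^2}$ or more crudely by $C_{\mathcal{B}}\|(w,\partial_t w)\|_{\mathcal{H}}^2$; but to gain the compact lower-order term one must not simply Gronwall this. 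Instead, one combines the energy identity with a perturbed functional (adding a multiplier term such as $\int_\Omega w\,\partial_t w\,dx$ and an observability multiplier adapted to the region $\omega$) to produce, after integration over a sufficiently long interval $[0,t_*]$, an inequality of quasi-stability type: this is exactly the stabilization mechanism already exploited in \cite{clt}, and it is legitimate here because the geometric control condition on $\omega$ and the structural assumptions on $f,g$ are unchanged. The lower-order term that appears is $\int_0^{t_*}\!\int_\Omega |w|^2\,dx\,dt$ (plus possibly a term in $\|w\|_{H^{-1}}$), which is compact relative to the $\mathcal{H}$-norm, giving the required estimate with $\widetilde{\mathcal{H}} = L^2 \times H^{-1}$. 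Third, verify the Hölder continuity in $t$ of $t\mapsto S_{\ep}(t)z$ on the absorbing set, which follows from the higher regularity \eqref{smooth2} together with \eqref{S1}. Finally, feed these into the abstract construction to obtain $\mathcal{A}^{\rm exp}_{\ep}$: a compact forward-invariant set, exponentially attracting bounded sets of $\mathcal{H}$, with finite fractal dimension measured in $\widetilde{\mathcal{H}}$.

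The main obstacle is the quasi-stability estimate — more precisely, extracting from the difference equation a genuine \emph{contraction} $\eta < 1$ after one application of $S_{\ep}(t_*)$, modulo a compact seminorm, rather than merely the Lipschitz bound \eqref{S1} with an exponentially growing constant. Because the damping is only localized, one cannot absorb the full energy of $w$ directly; the argument must route through the same observability/unique-continuation machinery (Carleman estimates adapted to $\omega$, as in \cite{fu,clt}) that was used for asymptotic compactness of $\mathcal{A}_{\ep}$, now applied to the linearized difference system in place of the linearization along a single trajectory. One must also check that all constants in this estimate can be chosen uniformly in $\ep \in [0,1]$, which is immediate since the forcing drops out of the difference equation and the bound on $\|\nabla u\|_2$ is $\ep$-uniform by Theorem \ref{thm-uniform}. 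Once the quasi-stability inequality is in hand, the rest is a routine invocation of the abstract theorem, and the finite fractal dimension in $\widetilde{\mathcal{H}}$ (but not necessarily in $\mathcal{H}$) is exactly why the attractor obtained is only a \emph{generalized} exponential attractor.
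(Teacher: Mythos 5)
Your overall strategy is the paper's strategy: establish a quasi-stability estimate for differences of trajectories on the uniformly absorbing set of Remark \ref{rem-absorb}, with the contraction coming from the observability/Carleman machinery of \cite{clt} (the paper imports this as Lemma \ref{pp}, i.e.\ \cite[Proposition 6]{clt}, plus the estimate of $\Upsilon$ analogous to \cite[Proposition 8]{clt}, yielding Lemma \ref{propprop}), then combine it with Lipschitz-in-time continuity in the weaker space $\mathcal{H}_{-1}=L^2(\Omega)\times H^{-1}(\Omega)$ to get a compact, forward invariant, exponentially attracting set whose fractal dimension is finite in $\mathcal{H}_{-1}$. The only structural difference is that you invoke the packaged quasi-stability theorem of \cite{CL-yellow} directly, while the paper unfolds it: it applies the lower-level Theorem \ref{thm-lasi} to the discrete map $V$ on the extended space $\mathcal{V}=\mathcal{H}\times W(0,T)$, projects, and takes $\mathcal{A}^{\rm exp}=\bigcup_{t\in[0,T]}S(t)\mathbb{P}\mathcal{A}_T$; the two routes are essentially equivalent.

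Two points in your write-up need correction. First, the quasi-stability inequality you display, with the compact term $C_*\|z^1-z^2\|_{\widetilde{\mathcal H}}$ evaluated only at the initial data, is not what can be proved for this problem (and not what the abstract theorem requires): the lower-order term must be taken along the trajectory, e.g.\ $\sup_{\tau\in[0,T]}\|w(\tau)\|_2^2$ as in \eqref{clave} or the time integral you mention later; your subsequent description is consistent with this, but the displayed form should be amended. Second, and more seriously, your justification of the time-continuity step is wrong as stated: \eqref{smooth2} (equivalently \eqref{smooth}) holds only for full trajectories lying on the attractor $\mathcal{A}_{\ep}$, so it cannot be used for trajectories issuing from the absorbing set; moreover, if such an $\mathcal{H}_1$-bound were available on $\mathcal{B}$ you would obtain Lipschitz continuity in time in $\mathcal{H}$ and hence a genuine (not merely generalized) exponential attractor, which is precisely what one cannot get here. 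The correct (and elementary) argument is the paper's Lemma \ref{key-lemma-exp}: estimate $\|\partial_t U(t)\|_{\mathcal{H}_{-1}}$ directly from the equation \eqref{P-U}, using only the uniform $\mathcal{H}$-bound on $\mathcal{B}$ and the linear growth of $g$, to get $\|S(t_1)z-S(t_2)z\|_{\mathcal{H}_{-1}}\le C_{\mathcal{B}T}|t_1-t_2|$; this is exactly the weak-space time regularity that makes the dimension count work in $\widetilde{\mathcal H}=\mathcal{H}_{-1}$. With these two repairs your proposal matches the paper's proof.
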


The proof of Theorem \ref{thm-exp} will be completed at the end of this section. It needs the following abstract result. 

\begin{theorem} $($\cite[Theorem 7.4.2]{CL-yellow}$)$ \label{thm-lasi}
Let $M$ be a closed bounded set of a separable Hilbert space $H$ and 
suppose that $V: M \to M$ is a Lipschitz mapping. In addition, suppose there exist compact seminorms $n_1$ and $n_2$ on $H$ such that 
$$ 
\|V_1 v_1-Vv_2\| \le \eta \|v_1-v_2\|+K(n_1(v_1-v_2)+n_2(Vv_1-Vv_2)), 
$$
for any $v_1,v_2 \in M$, where $0<\eta<1$ and $K>0$ are constants. Then, for any $\theta \in (\eta,1)$ 
there exists a positively invariant compact set $A_{\theta}\subset M$, of finite fractal dimension, satisfying
$$
{\rm dist}_{M}(V^k M,A_{\theta})\le r\theta^k, \;\; k \in \mathbb{N}, 
$$
for some $r>0$.
\end{theorem}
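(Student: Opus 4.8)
The plan is to reduce the theorem to a single \emph{covering lemma} and then iterate it. Throughout I read the hypothesis as $\|Vv_1-Vv_2\|\le \eta\|v_1-v_2\|+K(n_1(v_1-v_2)+n_2(Vv_1-Vv_2))$, the subscript on the first $V$ being a typo. Two elementary facts about a compact seminorm $n$ on $H$ are used repeatedly. First, $n$ is bounded on bounded sets: otherwise a bounded sequence with $n(x_k)\to\infty$ would admit no $n$-Cauchy subsequence (such a subsequence has bounded $n$-values), contradicting compactness; hence $n(x)\le C_n\|x\|$. Second, $n$ is positively homogeneous, so the number of sets of $n$-diameter at most $\alpha d$ needed to cover the $H$-ball $B(0,d)$ equals the number of sets of $n$-diameter at most $\alpha$ needed to cover $B(0,1)$, a finite quantity (total boundedness in $n$) that is \emph{independent of $d$}. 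This scale invariance is what forces the covering count at each step to be uniform.

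\emph{Covering lemma.} Fix $\theta\in(\eta,1)$ and let $L$ be the Lipschitz constant of $V$. I claim there is an integer $N_\theta$, independent of everything else, such that for any set $G\subset H$ of $H$-diameter $d$ the image $V(G\cap M)$ can be covered by at most $N_\theta$ sets of $H$-diameter at most $\theta d$. To prove it, put $\alpha=(\theta-\eta)/(2K)$ and cover $G\cap M$ by finitely many cells of $n_1$-diameter at most $\alpha d$; since $G\cap M$ lies in a translate of $B(0,d)$ this needs a number $p_0$ of cells independent of $d$. Simultaneously $V(G\cap M)$ lies in a translate of $B(0,Ld)$ by the Lipschitz bound, so it can be covered by $q_0$ sets of $n_2$-diameter at most $\alpha d$, again with $q_0$ independent of $d$ by homogeneity. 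Intersecting the two families, pulling the second back through $V$, yields at most $p_0q_0$ cells $C$ on each of which $n_1(v_1-v_2)\le\alpha d$ and $n_2(Vv_1-Vv_2)\le\alpha d$. The hypothesis then gives, for $v_1,v_2\in C$, $\|Vv_1-Vv_2\|\le\eta d+K(\alpha d+\alpha d)=\theta d$, so each $V(C)$ has $H$-diameter at most $\theta d$, proving the claim with $N_\theta=p_0q_0$. Phrasing the recursion in \emph{diameters} is essential: the leading coefficient is exactly $\eta$, and the correction $2K\alpha$ is tuned to reach any prescribed $\theta\in(\eta,1)$.

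\emph{Iteration and assembly.} Starting from $M$ (of diameter $d_0$) and using $V^{k+1}M\subseteq\bigcup V(G_i\cap M)$ whenever the $G_i$ cover $V^kM$, an induction covers $V^kM$ by at most $N_\theta^{\,k}$ sets of $H$-diameter $\theta^k d_0$. Picking one point of $V^kM$ from each nonempty cell gives a finite set $E_k\subset V^kM$ with $|E_k|\le N_\theta^{\,k}$ and ${\rm dist}_M(V^kM,E_k)\le\theta^k d_0$. I then set $A_\theta=\overline{\bigcup_{n\ge0}V^n\big(\bigcup_{k\ge0}E_k\big)}\subset M$. Forward invariance $V(A_\theta)\subset A_\theta$ follows from continuity of $V$ and $V(\bigcup_{n\ge0}V^nS)=\bigcup_{n\ge1}V^nS$. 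The attraction rate is immediate from $E_k\subset A_\theta$: ${\rm dist}_M(V^kM,A_\theta)\le{\rm dist}_M(V^kM,E_k)\le d_0\,\theta^k$, so $r=d_0$ works. For the dimension, the nesting $M\supseteq VM\supseteq V^2M\supseteq\cdots$ (valid since $VM\subseteq M$ and images preserve inclusions) gives $V^nE_k\subset V^{n+k}M\subset V^mM$ whenever $n+k\ge m$; hence at scale $\theta^m d_0$ the tail is covered by the $N_\theta^{\,m}$ cells covering $V^mM$, while the at most $m^2 N_\theta^{\,m}$ remaining points are covered individually. Thus the covering number of $A_\theta$ at scale $\theta^m d_0$ is at most $(m^2+1)N_\theta^{\,m}$, whence ${\rm dim}_f(A_\theta)\le \ln N_\theta/\ln(1/\theta)<\infty$; finiteness of covering numbers at every scale also yields total boundedness, so $A_\theta$ is compact.

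\emph{Main obstacle.} The delicate point is the covering lemma, and within it the image-seminorm term $n_2(Vv_1-Vv_2)$, which does not localize in the preimage. The resolution is to cover $V(G\cap M)$ \emph{directly} in the $n_2$-seminorm, exploiting that $V$ is Lipschitz (so the image of a diameter-$d$ set sits in a ball of radius $\sim d$) together with the homogeneity of $n_2$ to keep the count $d$-independent, and then to pull this cover back through $V$. The second subtlety is securing the sharp range $\theta\in(\eta,1)$ rather than $\theta\in(2\eta,1)$; this is precisely why the whole argument is run with $H$-diameters, where the contraction coefficient is $\eta$, instead of radii, where a crude two-point bound would introduce a spurious factor $2$.
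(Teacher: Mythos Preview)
The paper does not supply a proof of this statement at all: Theorem~\ref{thm-lasi} is quoted verbatim from \cite[Theorem 7.4.2]{CL-yellow} and used as a black box in the construction of the generalized exponential attractor. There is therefore nothing in the paper to compare your argument against.

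That said, your proof is a correct and self-contained rendering of the standard argument behind the cited result. The covering lemma is set up properly: the two crucial ingredients are (i) the homogeneity of the seminorms, which makes the cell count $p_0q_0$ scale-free, and (ii) the Lipschitz bound on $V$, which confines $V(G\cap M)$ to an $H$-ball of controlled radius so that the $n_2$-cover can be taken on the image side and pulled back. Your choice $\alpha=(\theta-\eta)/(2K)$ and the use of diameters rather than radii correctly recovers the full range $\theta\in(\eta,1)$. The assembly $A_\theta=\overline{\bigcup_{n\ge 0}V^n\big(\bigcup_{k\ge 0}E_k\big)}$ is the usual one; forward invariance, the exponential attraction rate, and the fractal-dimension bound via the tail/head splitting $n+k\ge m$ versus $n+k<m$ are all handled correctly. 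The compactness of $A_\theta$ follows, as you note, from total boundedness plus closedness in the complete space $H$. One cosmetic point: when you pass from covering numbers along the geometric sequence of scales $\theta^m d_0$ to the $\limsup$ over all $r\to 0$ in the definition of $\dim_f$, you implicitly use that $r_{m+1}/r_m=\theta$ is bounded away from $0$; this is standard but worth a word.
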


\begin{remark} To apply Theorem \ref{thm-lasi} in the context of problem \eqref{P-ep} we shall apply rather technical arguments. 
They rely on some results of \cite{clt} mainly related to Carleman estimates in order to absorb some lower order terms generated by, for instance, the integral   
$$
\int_{\Omega}(f(u)-f(v))(\partial_t u - \partial_t v)dx. 
$$
We shall recover some of theses estimates in Lemma \ref{pp} below. \qed
\end{remark}

\begin{remark} (Notations for Section \ref{sec-exp}) In what follows, we will be in the context of Theorem \ref{thm-attractor}. 
To simplify notations, without loss of generality, we take $\ep=1$ and $S(t)$ instead $S_1(t)$. 
Given two initial data $(u_0,u_1)$ and $(v_0,v_1)$ in a bounded set $B \subset \mathcal{H}$, 
we shall use notation   
$$
(u(t),\partial_t u(t)) = S(t)(u_0,u_1) \;\; \mbox{and} \;\; (v(t),\partial_t v(t)) = S(t)(v_0,v_1).
$$ 
Putting $w=u-v$ we write 
$$
E(t)=\frac{1}{2}\|(w(t),\partial_t w(t))\|^2_{\mathcal{H}}.
$$
From Remark \ref{rem-absorb}, $S(t)$ possesses a bounded absorbing set $\mathcal{B}$, which can be assumed closed and forward invariant. 
Moreover, $C_{T}$, $C_{BT}$ and $C_{\mathcal{B}T}$ will denote several positive constants with obvious meaning. \qed 
\end{remark}

\begin{lemma} $($\cite[Proposition 6]{clt}$)$ \label{pp} 
Given $T>0$ sufficiently large, there exist constants $\sigma \in (0,1)$ and $C_T>0$ such that
\begin{equation} \label{key-estimate}
E(T) + C_{T}\int_{0}^{T}E(t)dt\le \sigma E(0) + C_{BT}\sup_{t \in [0,T]}\|w(t)\|^2_2 + C_{T} \Upsilon , 
\end{equation}
where 
\begin{align} \label{upsilon}
\Upsilon = 
& \int_{0}^{T}\left| \int_{0}^{t}\int_{\Omega}(f(u)-f(v)) \partial_t w \, dxd\tau\right|dt 
+ \int_{0}^{T}\left| \int_{s}^{T}\int_{\Omega}(f(u)-f(v)) \partial_t w \, dxd\tau\right|ds \nonumber \\ 
& + \left|\int_{0}^{T}\int_{\Omega} (f(u)-f(v)) \partial_t w \, dxd\tau\right|. 
\end{align}  
\end{lemma}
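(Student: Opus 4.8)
The plan is to read \eqref{key-estimate} as a stabilizability/observability inequality for the difference $w=u-v$, which solves
$$\partial_t^2 w - \Delta w + a(x)\bigl(g(\partial_t u)-g(\partial_t v)\bigr) + \bigl(f(u)-f(v)\bigr) = 0, \qquad w|_{\partial\Omega}=0.$$
\textbf{Step 1 (energy identity).} Multiplying by $\partial_t w$ and integrating over $\Omega\times(0,t)$ gives
$$E(t)-E(0) = -\int_0^t\!\!\int_\Omega a(x)\bigl(g(\partial_t u)-g(\partial_t v)\bigr)\partial_t w\,dx\,d\tau - \int_0^t\!\!\int_\Omega \bigl(f(u)-f(v)\bigr)\partial_t w\,dx\,d\tau.$$
By the mean value theorem together with \eqref{g} and \eqref{x2}, the damping integrand dominates $a_0 m|\partial_t w|^2$ on $\omega$, so the damping defect $D_t$ is nonnegative and controls $\int_0^t\!\int_\omega|\partial_t w|^2$. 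Thus $E$ is nonincreasing up to the nonlinear integral, and the forward comparison $E(t)\le E(0)+|N_t|$ and the backward one $E(T)\le E(s)+|N_{s,T}|$ (with $N$ the nonlinear integral over the indicated interval) integrated in $t$ and $s$ produce precisely the first two terms of $\Upsilon$, the third being the full-interval nonlinear defect.

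\textbf{Step 2 (the crux: observability).} Here I would recover the total energy from the localized dissipation by the multiplier method combined with Carleman estimates, following \cite{clt}. Using the radial multiplier $(x-x_0)\cdot\nabla w$ and the multiplier $w$ yields a Rellich--Pohozaev type identity relating $\int_0^T E$ to boundary flux, the damping term, and the source $f(u)-f(v)$. Since $a$ is effective only on $\omega$ and $\omega$ satisfies merely the geometric control condition, the multiplier identity does not close by itself, so I would invoke the Carleman estimates of \cite{fu,LTY,LTZ} with a pseudoconvex weight adapted to $\omega$ to propagate the observation into the undamped part of $\Omega$. For $T$ large this produces
$$C_T\int_0^T E\,dt \le C_T\int_0^T\!\!\int_\omega|\partial_t w|^2\,dx\,dt + C_{BT}\sup_{[0,T]}\|w\|_2^2 + C_T\,\Upsilon,$$
where the $L^2$-remainder of the Carleman estimate and the lower-order contributions of the nonlinear source are absorbed into $C_{BT}\sup_{[0,T]}\|w\|_2^2$, using $\|f(u)-f(v)\|_2\le C_B\|\nabla w\|_2$ from \eqref{f1} and the uniform bound on the trajectories.

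\textbf{Step 3 (closing the contraction).} Combining Steps 1 and 2, I would replace $\int_0^T\!\int_\omega|\partial_t w|^2$ by $(a_0 m)^{-1}D_T=(a_0 m)^{-1}(E(0)-E(T)-N_T)$ and absorb $|N_T|\le\Upsilon$. Using that $E$ is nonincreasing modulo $\Upsilon$, so that $T\,E(T)\le\int_0^T E\,dt + C_T\Upsilon$, and feeding this back, one arrives at
$$\bigl(T+C_T\bigr)E(T) + C_T\int_0^T E\,dt \le C_T\,E(0) + C_{BT}\sup_{[0,T]}\|w\|_2^2 + C_T\,\Upsilon.$$
Dividing out and choosing $T$ large makes the coefficient of $E(0)$ equal to $\sigma=C_T/(T+C_T)\in(0,1)$, which is exactly \eqref{key-estimate}.

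The main obstacle is Step 2. The sign of $(x-x_0)\cdot\nu$ is indefinite on $\partial\Omega$, so the boundary flux generated by the radial multiplier is not favorable on the uncontrolled part $\Gamma_U$, while the damping observation is available only on $\omega$. Transporting that observation across the region where the damping vanishes is what forces the Carleman machinery, and the delicate point is to guarantee that the unavoidable lower-order terms land in $\sup_{[0,T]}\|w\|_2^2$ (a compact term, to be removed later by a compactness/unique-continuation argument) rather than in $E$, so that the strict contraction factor $\sigma<1$ survives.
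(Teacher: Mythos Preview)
The paper does not give its own proof of this lemma; it is imported verbatim as \cite[Proposition~6]{clt} and used as a black box in the subsequent Lemma~\ref{propprop}. So there is no in-paper argument to compare against.

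That said, your three-step outline is a faithful high-level summary of the strategy actually carried out in \cite{clt}: the energy identity for the difference $w=u-v$, an observability inequality for $\int_0^T E$ obtained through multiplier techniques and the Carleman estimates of \cite{fu,LTY,LTZ} adapted to the geometry of $\omega$, and a closing step in which $T$ large forces $\sigma<1$. You also correctly isolate the real difficulty: propagating the observation from $\omega$ into the undamped region so that the lower-order remainders are compact (i.e.\ land in $\sup_{[0,T]}\|w\|_2^2$) rather than in $E$. As a self-contained proof, however, your Step~2 is only a pointer to the Carleman machinery rather than an argument; but that is precisely the level at which the present paper treats this lemma as well.
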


Lemma \ref{pp} allows us to prove the following key estimate. 

\begin{lemma} \label{propprop}
Let $\mathcal{B}$ be a closed positively invariant bounded absorbing set of the system $(\mathcal{H},S(t))$. Then, there exist positive constants $C_{\mathcal{B}T}$ and   
$\sigma \in (0,1)$ such that
\begin{equation} \label{clave}
E(s+T)\le \sigma E(s)+C_{\mathcal{B}T} \sup_{\tau \in [0,T]}\|w(s+\tau)\|^2_2, \;\; s \ge 0.
\end{equation}
\end{lemma}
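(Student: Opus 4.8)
The plan is to derive \eqref{clave} directly from the key estimate \eqref{key-estimate} of Lemma \ref{pp}, the only thing left to do being to show that the cubic term $\Upsilon$ can be absorbed into the left‑hand side of \eqref{key-estimate} plus an arbitrarily small fraction of $E(0)$ and a harmless multiple of $\sup_{\tau\in[0,T]}\|w(\tau)\|_2^2$. First I would reduce to the case $s=0$: since $\mathcal{B}$ is forward invariant, if $(u_0,u_1),(v_0,v_1)\in\mathcal{B}$ then $S(s)(u_0,u_1),S(s)(v_0,v_1)\in\mathcal{B}$, and by the autonomy of \eqref{P-ep} the maps $t\mapsto S(t+s)(u_0,u_1)$ and $t\mapsto S(t+s)(v_0,v_1)$ are again solutions issuing from $\mathcal{B}$. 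Hence it suffices to prove $E(T)\le\sigma E(0)+C_{\mathcal{B}T}\sup_{\tau\in[0,T]}\|w(\tau)\|_2^2$ and then replace $t$ by $t+s$.

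The heart of the proof is the treatment of $\Upsilon$. For each of the three terms in \eqref{upsilon} the basic object is $\int_a^b\int_\Omega(f(u)-f(v))\partial_t w\,dx\,d\tau$, and the idea is to integrate by parts in $\tau$ so as to move the time derivative off $w$:
\[
\int_a^b\!\!\int_\Omega(f(u)-f(v))\partial_t w\,dx\,d\tau
=\Big[\int_\Omega(f(u)-f(v))\,w\,dx\Big]_{\tau=a}^{\tau=b}
-\int_a^b\!\!\int_\Omega\big(f'(u)\partial_t u-f'(v)\partial_t v\big)\,w\,dx\,d\tau .
\]
On the bounded set $\mathcal{B}$, assumption \eqref{f1} gives $|f'(s)|\le C(1+|s|^2)$ and hence $|f(u)-f(v)|\le C_{\mathcal{B}}(1+|u|^2+|v|^2)|w|$, while also $|f'(u)\partial_t u-f'(v)\partial_t v|\le C_{\mathcal{B}}(1+|u|^2+|v|^2)|\partial_t w|+C_{\mathcal{B}}(1+|u|+|v|)|\partial_t v|\,|w|$. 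The boundary terms are then bounded, via $H^1_0(\Omega)\hookrightarrow L^6(\Omega)$ and the interpolation inequality $\|w\|_3^2\le C\|w\|_2\|\nabla w\|_2$ together with Young's inequality, by $\delta\,E(\tau)+C_{\mathcal{B},\delta}\|w(\tau)\|_2^2$, with $\delta>0$ at our disposal; the interior terms are bounded, by Hölder with exponents $(3,2,6)$ and $(6,2,3)$ and the same embedding, by $C_{\mathcal{B}}\,E(\tau)$, and are then integrated in time.

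Collecting these estimates (the outer time integrals in \eqref{upsilon} only contributing extra factors of $T$) yields a bound of the form
\[
C_T\,\Upsilon\le \alpha\!\int_0^T\!E(t)\,dt+\delta\,\big(E(0)+E(T)\big)+C_{\mathcal{B}T}\sup_{\tau\in[0,T]}\|w(\tau)\|_2^2 ,
\]
where $\delta$ is small and the coefficient $\alpha$ of $\int_0^T E$ is controlled in terms of $\|\mathcal{B}\|_{\mathcal H}$, $f$, $h$ and $T$. Inserting this into \eqref{key-estimate}, one absorbs the $\delta E(T)$ term and the integral term into the left‑hand side (recalling that in \eqref{key-estimate} the integral term already carries a coefficient $C_T$ produced, for $T$ large, by the Carleman/observability estimates of \cite{clt}, which is what makes this absorption possible) and drops the remaining nonnegative integral, obtaining $(1-\delta)E(T)\le(\sigma+\delta)E(0)+C_{\mathcal{B}T}\sup_{\tau\in[0,T]}\|w(\tau)\|_2^2$. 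Choosing $\delta$ so small that $(\sigma+\delta)/(1-\delta)<1$ gives \eqref{clave} for $s=0$ with a new constant $\sigma\in(0,1)$, and the case $s\ge0$ follows from the reduction described above.

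The main obstacle is precisely the criticality of the cubic nonlinearity: the terms generated by $\Upsilon$ sit exactly on the borderline of the energy space, so no crude use of Sobolev embeddings alone suffices. The decisive ingredient is the interpolation $\|w\|_{L^3}^2\le C\|w\|_{L^2}\|w\|_{H^1_0}$, which allows one to trade a full power of the $\mathcal H$-norm of $w$ for the weaker — indeed compact — quantity $\|w\|_{L^2}$ plus an arbitrarily small share of $E$; this is what converts \eqref{key-estimate} into the quasi-stability form \eqref{clave}, and it is also what will furnish, at the next stage, the compact seminorm needed to apply Theorem \ref{thm-lasi}. Keeping the coefficients of $\int_0^T E$ and of $E(0)$ strictly below those available on the left-hand side of \eqref{key-estimate} is the quantitative point where Lemma \ref{pp} is used in an essential way.
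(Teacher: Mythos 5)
Your overall skeleton (reduce to $s=0$ by forward invariance and autonomy, bound $\Upsilon$, absorb into \eqref{key-estimate}, then shift by $s$) is the same as the paper's, and your treatment of the \emph{endpoint} terms is fine: pairing $f(u)-f(v)\sim(1+u^2+v^2)w$ against $w$ gives $\int_\Omega(1+u^2+v^2)w^2\,dx\le C_{\mathcal B}\|w\|_3^2\le C_{\mathcal B}\|w\|_2\|\nabla w\|_2$, and interpolation plus Young indeed yields $\delta E+C_\delta\|w\|_2^2$. The genuine gap is in the \emph{interior} (time-integrated) terms. After your integration by parts the critical contributions are of the type $\int_0^T\!\int_\Omega(1+u^2+v^2)|\partial_t w||w|\,dx\,d\tau$ and $\int_0^T\!\int_\Omega(1+|u|+|v|)|\partial_t v|\,|w|^2\,dx\,d\tau$; H\"older with $(3,2,6)$ resp. $(6,2,3)$ and $H^1_0\hookrightarrow L^6$ bound these by $C_{\mathcal B}\int_0^T E(\tau)\,d\tau$, but here the $w$-factors enter at the $L^6$ (i.e.\ full $H^1$) level, so no interpolation with $\|w\|_2$ is available and the constant $C_{\mathcal B}$ carries no smallness — this is exactly where the criticality of the cubic nonlinearity bites (for a subcritical $f$ your scheme would work). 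Consequently your bound reads $C_T\Upsilon\le C_TC_{\mathcal B}\int_0^TE+\delta(E(0)+E(T))+C_{\mathcal B T\delta}\sup_\tau\|w(\tau)\|_2^2$, and the absorption of $C_TC_{\mathcal B}\int_0^TE$ into the left-hand term $C_T\int_0^TE$ of \eqref{key-estimate} requires $C_{\mathcal B}<1$, which is false in general; nothing in the statement of Lemma \ref{pp} says the left-hand $C_T$ is large compared with the $C_T$ multiplying $\Upsilon$ (they are the same symbol), so the step you flag as ``the quantitative point'' is precisely the step your argument does not deliver.

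The paper closes this gap differently: it does not re-derive the bound on $\Upsilon$ by elementary means, but invokes the refined estimate \eqref{key-estimate2} (the adaptation of \cite[Proposition 8]{clt} from the attractor to the absorbing set $\mathcal B$), whose essential feature is that the coefficient $\nu$ in front of \emph{both} the endpoint energies \emph{and} $\int_s^t\|\nabla w\|_2^2\,d\tau$ can be made arbitrarily small, at the price of a large constant only on the compact quantity $\sup_\tau\|w(\tau)\|_2^2$. With that input, $C_T\Upsilon\le \nu C_T^2\big(E(0)+E(T)+\int_0^TE\big)+C_{\mathcal B T\nu}\sup_\tau\|w\|_2^2$, and choosing $\nu$ small relative to the fixed $C_T$ and $\sigma$ makes the absorption immediate, yielding \eqref{clave}. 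Obtaining the smallness in front of $\int_0^T\|\nabla w\|_2^2$ at the critical exponent is a nontrivial result (it uses that $u,v$ are genuine trajectories of the damped equation, not just H\"older/Sobolev bookkeeping), so your proposal either needs to quote \eqref{key-estimate2}/\cite[Proposition 8]{clt} at this point — in which case it reduces to the paper's proof — or supply a proof of that smallness, which the integration-by-parts argument as written does not provide.
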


\begin{proof} 
Firstly, we see that given $\nu>0$, 
\begin{align} \label{key-estimate2}
\left|\int_{s}^{t} \int_{\Omega}(f(u)-f(v))\partial_t w \,dx d\tau \right| \le 
& \; \nu (E(s)+E(t)) + C_{\mathcal{B}\nu} \sup_{\tau \in [s,t]} \| w(\tau) \|_2^2 \nonumber \\ 
& + \nu \int_{s}^{t}\|\nabla w\|_2^2 \, d\tau, \;\; 0 \le s \le t,
\end{align}
for some constant $C_{\mathcal{B}\nu} > 0$. This estimate is analogous to the one in \cite[Proposition 8]{clt}.  
The only difference is that trajectories (not necessarily complete) 
are bounded in $\mathcal{B}$ instead within attractor $\mathcal{A}$.

Then, in view of definition of $\Upsilon$ in \eqref{upsilon}, we have from (\ref{key-estimate2}), 
$$
\Upsilon \le \nu C_T(E(0)+E(T))+C_{\mathcal{B} T \nu}\sup_{\tau \in [0,T]}\|w(\tau)\|_2^2 + \nu C_T \int_{0}^{T}E(t)dt,
$$
for certain $C_{\mathcal{B} T \nu } >0$. Then, taking $\nu>0$ small enough, the estimate (\ref{key-estimate}) becomes 
$$
E(T)\le \sigma^* E(0) + C_{\mathcal{B}T} \sup_{t \in [0,T]} \| w(t) \|^2_2,
$$
with $\sigma^* \in (0,1)$ and some constant $C_{\mathcal{B}T} >0$. Since $\mathcal{B}$ is 
closed forward invariant, we obtain for any $s\ge 0$,  
$$
E(s+T)\le \sigma^* E(s) + C_{\mathcal{B}T}\sup_{t \in [0,T]}\|w(s+t)\|^2_2,
$$
which shows \eqref{clave}. 
\end{proof}

Now, we are in a position to consider Theorem \ref{thm-lasi} in our context. Given $T>0$, large enough, we define 
\begin{equation} \label{V}
\mathcal{V} = \mathcal{H} \times W(0,T),
\end{equation}
where
$$
W(0,T) = \{w \in H^1(0,T;L^2(\Omega)) \, | \, \| w \|_{W} = \| (w,\partial_t w) \|_{L^2(0,T;\mathcal{H})} <\infty \}.
$$
It is clear that
\begin{equation} \label{norm}
\|(w(0),\partial_t w(0))\|^2_{\mathcal{V}} = \|(w(0),\partial_t w(0))\|^2_{\mathcal{H}} + \int_{0}^{T} \|(w(t),\partial_t w(t))\|^2_{\mathcal{H}} \, dt.
\end{equation}
Additionally we define 
$$
\mathcal{B}_T = \{\left(z_0,S(t)z_0\right) \, | \, z_0 \in \mathcal{B}, \; t \in [0,T] \} \subset \mathcal{V},
$$
where $\mathcal{B}$ is a closed invariant bounded absorbing set as in Lemma \ref{propprop}, 
and define $V:\mathcal{B}_T \to \mathcal{V}$ by setting 
$$
V(z_0,S(t)z_0) = (S(T)z_0 , S(T+t)z_0).
$$

\begin{lemma} \label{propkey}
Under above definitions we have: 
\begin{enumerate}
\item Let $B_T$ be a bounded subset of $\mathcal{B}_T$. Then there exists a constant $C_{BT}>0$ such that  
\begin{equation} \label{key1}
\|VU_1-VU_2\|_{\mathcal{V}}\le C_{BT} \|U_1-U_2\|_{\mathcal{V}},
\end{equation}
for any $U_1, U_2 \in B_T$. \smallskip 

\item There exist constants $K_T>0$ and $\sigma \in (0,1)$ such that, 
for any 
$$
Z_1=(u(0),\partial_t u(0),u(t)) \;\; \mbox{and} \;\; Z_2=(v(0),\partial_t v(0),v(t)) \;\; \mbox{in} \;\; \mathcal{B}_T, 
$$ 
\begin{align} \label{key-ine}
\| VZ_1 - VZ_2\|_{\mathcal{V}} \le 
& \; \sigma \|Z_1-Z_2\|_{\mathcal{V}} \nonumber \\ 
& + K_T\left(\sup_{s \in [0,T]}\|w(s)\|_{L^2(\Omega)}+\sup_{s \in [0,T]}\|w(T+s)\|_{L^2(\Omega)} \right), 
\end{align}
where $w=u-v$. \smallskip 
\item The mapping $V$ possesses a positively invariant compact set $\mathcal{A}_{T} \subset \mathcal{B}_T$, of finite fractal dimension, and
\begin{equation} \label{final}
{\rm dist}_{\mathcal{V}}(V^k\mathcal{B}_T,\mathcal{A}_{T})\le rq^k, \;\; k \in \mathbb{N},
\end{equation}
for some $r>0$ and $q \in (0,1)$.
\end{enumerate}
\end{lemma}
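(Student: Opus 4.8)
The plan is to establish items $(1)$--$(3)$ in order: $(1)$ and $(2)$ are essentially repackagings of the continuity estimate \eqref{S1} and of Lemma~\ref{propprop}, while $(3)$ is obtained by feeding $(2)$ into Theorem~\ref{thm-lasi}. For $(1)$, write $U_i=(z_0^i,S(\cdot)z_0^i)$ with $z_0^i\in\mathcal{B}$; since $VU_i=(S(T)z_0^i,S(T+\cdot)z_0^i)$, the definition \eqref{norm} of the $\mathcal{V}$-norm gives
\[
\|VU_1-VU_2\|_{\mathcal{V}}^2=\|S(T)z_0^1-S(T)z_0^2\|_{\mathcal{H}}^2+\int_0^{T}\|S(T+t)z_0^1-S(T+t)z_0^2\|_{\mathcal{H}}^2\,dt ,
\]
and applying \eqref{S1} on the interval $[0,2T]$ to the bounded set $\mathcal{B}$, together with $\|z_0^1-z_0^2\|_{\mathcal{H}}\le\|U_1-U_2\|_{\mathcal{V}}$, yields \eqref{key1} with a constant of the form $(1+T)^{1/2}C_{\mathcal{B},2T}$.

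For $(2)$, with $Z_i=(z_0^i,S(\cdot)z_0^i)$ and $E$ as in the notations of this section, the same computation gives $\|VZ_1-VZ_2\|_{\mathcal{V}}^2=2E(T)+2\int_T^{2T}E(\tau)\,d\tau$ and $\|Z_1-Z_2\|_{\mathcal{V}}^2=2E(0)+2\int_0^{T}E(\tau)\,d\tau$. I would then apply \eqref{clave} with $s=0$ to estimate $E(T)$, and, after the change of variable $\tau=s+T$, with $s\in[0,T]$ under the integral sign to estimate $\int_T^{2T}E(\tau)\,d\tau$ by $\sigma\int_0^{T}E(s)\,ds$ plus a supremum term; forward invariance of $\mathcal{B}$ is precisely what allows \eqref{clave} to be used all along the trajectory. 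Collecting the terms gives
\[
\|VZ_1-VZ_2\|_{\mathcal{V}}^2\le\sigma\,\|Z_1-Z_2\|_{\mathcal{V}}^2+C_{\mathcal{B}T}\Big(\sup_{s\in[0,T]}\|w(s)\|_2^2+\sup_{s\in[0,T]}\|w(T+s)\|_2^2\Big),
\]
and \eqref{key-ine} follows by taking square roots and using $\sqrt{a+b}\le\sqrt{a}+\sqrt{b}$, with $\sigma$ there replaced by $\sqrt{\sigma}\in(0,1)$ and $K_T=C_{\mathcal{B}T}^{1/2}$.

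For $(3)$, I would invoke Theorem~\ref{thm-lasi} with the separable Hilbert space $\mathcal{V}$, the closed bounded set $\mathcal{B}_T$, and the map $V$. One checks $V(\mathcal{B}_T)\subset\mathcal{B}_T$ from forward invariance of $\mathcal{B}$, since $S(T+t)z_0=S(t)(S(T)z_0)$ with $S(T)z_0\in\mathcal{B}$; Lipschitz continuity of $V$ on $\mathcal{B}_T$ is item $(1)$. As the seminorms I would take $n_1=n_2=n$, where $n(Z)$ is the supremum over $s\in[0,T]$ of the $L^2(\Omega)$-norm of the $H^1_0(\Omega)$-component of the trajectory encoded in $Z$, so that $n(Z_1-Z_2)=\sup_{s\in[0,T]}\|w(s)\|_2$ and $n(VZ_1-VZ_2)=\sup_{s\in[0,T]}\|w(T+s)\|_2$; with these choices \eqref{key-ine} is exactly the hypothesis of Theorem~\ref{thm-lasi} with $\eta=\sqrt{\sigma}$ and $K=K_T$. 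Theorem~\ref{thm-lasi} then produces a positively invariant compact set $\mathcal{A}_T\subset\mathcal{B}_T$ of finite fractal dimension with ${\rm dist}_{\mathcal{V}}(V^k\mathcal{B}_T,\mathcal{A}_T)\le r\,q^k$ for every $q\in(\sqrt{\sigma},1)$, which is \eqref{final}.

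The step I expect to be the main obstacle is verifying that $n$ is a compact seminorm, since the $W(0,T)$-norm by itself does not control a supremum in time. Here one exploits the a priori smoothing of differences of trajectories: for $z_0^1,z_0^2\in\mathcal{B}$, Remark~\ref{rem-E0} bounds $(u,\partial_t u)$ and $(v,\partial_t v)$ in $L^\infty(0,T;\mathcal{H})$, and the equation for $w=u-v$ (with $g$ Lipschitz and $f$ of critical growth) then bounds $\partial_t^2 w$ in $L^\infty(0,T;H^{-1}(\Omega))$; hence $w$ and $\partial_t w$ are bounded in $L^\infty(0,T;H^1_0(\Omega))$ and $L^\infty(0,T;L^2(\Omega))$, uniformly over $\mathcal{B}_T$. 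An Arzel\`a--Ascoli argument---the functions $w$ being Lipschitz in $t$ with values in $L^2(\Omega)$ and precompact in $L^2(\Omega)$ at each fixed $t$---then shows this family is relatively compact in $C([0,T];L^2(\Omega))$, which is exactly the precompactness needed for $n$ on $\mathcal{B}_T$; one may also first reduce $\sup_{s}\|w(s)\|_2^2$ to $\|w(0)\|_2^2+C_{\mathcal{B}T}\|w\|_{L^2(0,T;L^2(\Omega))}$ using the uniform bound on $\partial_t w$, which shifts the issue onto the $L^2(0,T;L^2(\Omega))$-seminorm on $W(0,T)$, compact by the Aubin--Lions lemma. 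The remaining estimates in $(1)$--$(2)$ are routine given Theorem~\ref{thm-wp}, Lemma~\ref{propprop}, and the structure of $\mathcal{V}$.
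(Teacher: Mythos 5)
Your proposal is correct and follows essentially the same route as the paper: item (1) from the Lipschitz estimate \eqref{S1} and the norm \eqref{norm}, item (2) by applying \eqref{clave} at $s=0$ and integrating it over $s\in[0,T]$ (your explicit square-root step, replacing $\sigma$ by $\sqrt{\sigma}$, is a detail the paper glosses over), and item (3) by invoking Theorem \ref{thm-lasi} on $M=\mathcal{B}_T$, $H=\mathcal{V}$ with the seminorm $\sup_{s\in[0,T]}\|\cdot\|_{2}$ and forward invariance of $\mathcal{B}$. Your Arzel\`a--Ascoli/Aubin--Lions justification of the compactness of that seminorm (which the paper simply asserts) is a welcome addition; just note that it verifies compactness along differences of elements of $\mathcal{B}_T$, which is how the quasi-stability theorem uses it, and the same H\"older-in-time plus pointwise-precompactness argument in fact gives compactness on all of $\mathcal{V}$.
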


\begin{proof} 
The Lipschitz condition \eqref{key1} follows from (\ref{S1}) and (\ref{V}). To prove \eqref{key-ine}, 
we integrate (\ref{clave}) over $s \in [0,T]$ and add it to (\ref{clave}), obtaining 
$$
E(T)+\int_{T}^{2T}E(T)\le \sigma \left(E(0)+\int_{0}^{T}E(s)ds \right) +2T C_{\mathcal{B}T} \sup_{\tau \in [0,2T]}\|w(\tau)\|_2^2 .
$$
From definition of norm (\ref{norm}) we see \eqref{key-ine}. 
	
To prove the last statement we apply Theorem \ref{thm-lasi} with $M=\mathcal{B}_T$, $H=\mathcal{V}$. 
Indeed, we note that $\sup_{s \in [0,T]}\|\cdot\|_{2}$ defines a compact seminorm in $\mathcal{V}$. 
In addition, since $\mathcal{B}$ is closed forward invariant, we have $V\mathcal{B}_T\subseteq\mathcal{B}_T$. 
Then Theorem \ref{thm-lasi} grants \eqref{final}. 
\end{proof}

For the next lemma we define the ultra-weak phase space 
$$
\mathcal{H}_{-1} = L^2(\Omega) \times H^{-1}(\Omega). 
$$

\begin{lemma} \label{key-lemma-exp}
Let $\mathcal{B}$ be a closed forward invariant bounded absorbing set. 
Then there exist a constant $C_{\mathcal{B}T}>0$ such that
\begin{equation} \label{key-holder-estimative}
\|S(t_1)z-S(t_2)z\|_{\mathcal{H}_{-1}} \le C_{\mathcal{B}T}|t_1-t_2|, \;\; t_1,t_2 \in [0,T], \ z \in \mathcal{B}.
\end{equation}
\end{lemma}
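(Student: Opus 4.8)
The plan is to express both components of $S(t_1)z - S(t_2)z$ as time integrals of the next $t$-derivative and to bound the integrands uniformly, using only that the trajectory stays in the bounded forward-invariant set $\mathcal{B}$. Write $S(t)z = (u(t),\partial_t u(t))$ and assume $t_2 \le t_1$. Since $\mathcal{B}$ is forward invariant, $(u(s),\partial_t u(s)) \in \mathcal{B}$ for every $s \ge 0$, so there is a constant $C_{\mathcal{B}}>0$ with $\|\nabla u(s)\|_2 + \|\partial_t u(s)\|_2 \le C_{\mathcal{B}}$ for all $s\ge 0$ (cf. Remark \ref{rem-E0}). For the first component, $u \in C^1(\mathbb{R}^+;L^2(\Omega))$ gives $u(t_1)-u(t_2) = \int_{t_2}^{t_1}\partial_t u(s)\,ds$ in $L^2(\Omega)$, whence $\|u(t_1)-u(t_2)\|_2 \le C_{\mathcal{B}}\,|t_1-t_2|$.

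For the second component I would first record that along any weak solution the equation holds as $\partial_t^2 u = \Delta u - a(x)g(\partial_t u) - f(u) + h(x)$ in $H^{-1}(\Omega)$, and moreover $\partial_t^2 u \in C(\mathbb{R}^+;H^{-1}(\Omega))$: indeed $\Delta u \in C(\mathbb{R}^+;H^{-1})$, while $a\,g(\partial_t u)$, $f(u)$ and $h$ lie in $C(\mathbb{R}^+;L^2) \hookrightarrow C(\mathbb{R}^+;H^{-1})$ — here one uses $|g(s)|\le M|s|$ (from \eqref{g} with $g(0)=0$), the cubic bound $|f(u)|\le c(1+|u|^3)$ (a consequence of \eqref{f1}) together with $H^1_0(\Omega)\hookrightarrow L^6(\Omega)$, and $h\in L^2(\Omega)$. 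Then $\partial_t u \in C^1(\mathbb{R}^+;H^{-1})$ and $\partial_t u(t_1)-\partial_t u(t_2) = \int_{t_2}^{t_1}\partial_t^2 u(s)\,ds$ in $H^{-1}(\Omega)$. Using $\|\nabla u(s)\|_2 + \|\partial_t u(s)\|_2 \le C_{\mathcal{B}}$ once more, each term on the right of the equation is bounded in $H^{-1}$ uniformly in $s$ (e.g. $\|\Delta u(s)\|_{H^{-1}}\le\|\nabla u(s)\|_2$, $\|a\,g(\partial_t u(s))\|_{H^{-1}}\le C\|a\|_\infty M\|\partial_t u(s)\|_2$, $\|f(u(s))\|_{H^{-1}}\le C(1+\|\nabla u(s)\|_2^3)$, $\|h\|_{H^{-1}}\le C\|h\|_2$), so $\sup_{s\ge 0}\|\partial_t^2 u(s)\|_{H^{-1}} \le C_{\mathcal{B}}$ and hence $\|\partial_t u(t_1)-\partial_t u(t_2)\|_{H^{-1}}\le C_{\mathcal{B}}\,|t_1-t_2|$. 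Adding the two estimates and collecting constants into $C_{\mathcal{B}T}$ yields \eqref{key-holder-estimative}.

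The only delicate point — and the one I would settle first — is the regularity needed to justify the fundamental-theorem-of-calculus identity for $\partial_t u$ in $H^{-1}$. If one prefers to sidestep a discussion of $C^1(\mathbb{R}^+;H^{-1})$-regularity of weak solutions, one can prove \eqref{key-holder-estimative} first for strong solutions with data in $\mathcal{H}_1$, where all the terms above are genuinely in $L^2$ and the computation is classical, and then pass to arbitrary $z\in\mathcal{B}$ by density, transferring the bound via the Lipschitz estimate \eqref{S1} of Theorem \ref{thm-wp}. Everything else is a routine application of the Sobolev embedding $H^1_0(\Omega)\hookrightarrow L^6(\Omega)$, the Lipschitz bound on $g$, and the uniform energy bound coming from forward invariance of $\mathcal{B}$; no Carleman-type input is needed for this estimate.
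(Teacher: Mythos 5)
Your proof is correct and takes essentially the same route as the paper: write $S(t_1)z-S(t_2)z$ as the time integral of $\partial_t U$ and bound $\|\partial_t U(s)\|_{\mathcal{H}_{-1}}$ uniformly on $[0,T]$ via the equation and the boundedness of $\mathcal{B}$. If anything, your version is slightly more careful, since you estimate $\Delta u$, $a\,g(\partial_t u)$, $f(u)$ and $h$ directly in $H^{-1}(\Omega)$ and address the fundamental-theorem-of-calculus regularity issue (via strong solutions plus \eqref{S1}), whereas the paper's displayed estimate passes through $\|\mathbb{A}U(t)\|_{\mathcal{H}}$, which taken literally would require $U(t)\in\mathcal{H}_1$.
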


\begin{proof} 
The Cauchy problem \eqref{P-U} implies that  	
\begin{align*}
\|\partial_t U(t)\|_{\mathcal{H}_{-1}} 
& \le \|\mathbb{A}U(t)\|_{{\mathcal{H}_{-1}}} + \|{\mathbb{F}}U(t)\|_{{\mathcal{H}_{-1}}}+\|\mathbb{H}\|_{{\mathcal{H}_{-1}}} \\
& \le C \left(  \|\mathbb{A}U(t)\|_{{\mathcal{H}}}+\|{\mathbb{F}}U(t)\|_{{\mathcal{H}}}+\|\mathbb{H}\|_{{\mathcal{H}}}\right). 
\end{align*}
Now, since $\mathbb{F}$ is locally Lipschitz and $\mathcal{B}$ is bounded, the estimate \eqref{S1} implies that 
$$
\|\mathbb{A}U(t)\|_{\mathcal{H}} + \|\mathbb{F}U(t)\|_{\mathcal{H}} + \|\mathbb{H}\|_{\mathcal{H}} \le C_{\mathcal{B}T},
$$
for some constant $C_{\mathcal{B}T}>0$. Since $U(t) = S(t)z$, we see for $0\le t_1\le t_2\le T$, 
	$$
	\|S(t_2)z - S(t_1)z \|\le \int_{t_1}^{t_2}\|\partial_t U(s)\|_{\mathcal{H}_{-1}}ds\le C_{\mathcal{B}T}|t_2-t_1|,
	$$
which implies \eqref{key-holder-estimative}.
\end{proof}

\noindent{\bf Proof of Theorem \ref{thm-exp}.} For $T>0$ large enough, we obtain from \eqref{final}, 
$$
{\rm dist}_{\mathcal{V}}(V^k \mathcal{B}_T,\mathcal{A}_T)\le rq^k, \;\; k \in \mathbb{N}, 
$$
for some $r>0$ and $q \in (0,1)$. In particular, 
\begin{equation} \label{SKT}
	{\rm dist}_{\mathcal{H}}(S(kT)\mathcal{B},\mathbb{P}\mathcal{A}_T)\le rq^k, \;\; k \in \mathbb{N}, 
\end{equation}
where $\mathbb{P}:\mathcal{V}\to \mathcal{H}$ is the projection of $\mathcal{A}_T$ over the first component, that is,
$$
\mathbb{P}\mathcal{A}_T = \{z_0 \in \mathcal{B} \ | \ (z_0,S(t)z_0) \in \mathcal{A}_T, \ t \in [0,T] \}.
$$
It is clear that $\mathbb{P}\mathcal{A}_T$ is a compact set in $\mathcal{H}$ and $S(T)\mathbb{P}\mathcal{A}_T\subseteq \mathbb{P}\mathcal{A}_T$. Moreover, 
\begin{equation} \label{dim-AT}
{\rm dim}^{\mathcal{H}}_f \mathbb{P}\mathcal{A}_T \le {\rm dim}^{\mathcal{V}}_f \mathcal{A}_T <\infty,
\end{equation}
We define the compact set in $\mathcal{H}$ (candidate to exponential attractor)
$$
\mathcal{A}^{\rm exp}:=\bigcup_{t \in [0,T]} S(t)\mathbb{P}\mathcal{A}_T.
$$
Then, by construction $S(t)\mathcal{A}^{\rm exp} \subseteq \mathcal{A}^{\rm exp}$. 
In addition, from (\ref{S1}) and (\ref{SKT}) we see that 
$$
{\rm dist}_{\mathcal{H}}(S(t)\mathcal{B},\mathcal{A}^{\rm exp})\le Ce^{-\gamma t}, \ \ t\ge 0,
$$
for some $C,\gamma>0$. It remains to show that $\mathcal{A}^{\rm exp}$ has finite fractal dimension in some space 
containing $\mathcal{H}$. We shall use Lemma \ref{key-lemma-exp}. 

Let us define a mapping 
$$
\mathcal{F} : \mathbb{R} \times \mathcal{H} \to \mathcal{H}_{-1}  
\;\; \mbox{such that} \;\; \mathcal{F}(t,z) = S(t)z, \;\; t\ge 0. 
$$
Then we have  
$$
\mathcal{A}^{\rm exp} = \mathcal{F}([0,T]\times \mathbb{P} \mathcal{A}_{T}).  
$$
We claim that $\mathcal{F}$ is Lipschitz restricted to $[0,T]\times \mathbb{P} \mathcal{A}_{T}$. Indeed, taking into account that $\mathbb{P} \mathcal{A}_T \subset \mathcal{B}$, estimates \eqref{S1} and \eqref{key-holder-estimative} imply that, 
\begin{align*}
\| \mathcal{F}(t_1,z_1) - \mathcal{F}(t_2,z_2) \|_{\mathcal{H}_{-1}}  
& \le \| S(t_1)z_1 - S(t_1)z_2 \|_{\mathcal{H}_{-1}} + \| S(t_1)z_2 - S(t_2)z_2 \|_{\mathcal{H}_{-1}} \\ 
& \le C_{\mathcal{B}T} \| z_1 - z_2 \|_{\mathcal{H}} + C_{\mathcal{B}T} | t_1 - t_2 | \\ 
& \le L \| (t_1,z_1) - (t_2,z_2) \|_{\mathbb{R} \times \mathcal{H}}, 	
\end{align*}
for some $L>0$. This proves the claim. Now, since Lipschitz mapping does not increase fractal dimension (e.g. \cite[Proposition C.1]{efnt}), we conclude that 
\begin{align*}  
{\rm dim}_f^{\mathcal{H}_{-1}} \mathcal{A}^{\rm exp}
& \le {\rm dim}_f^{\mathbb{R}\times \mathcal{H}} ([0,T]\times \mathbb{P} \mathcal{A}_{T}) \smallskip \\ 
& \le 1 + {\rm dim}^{\mathcal{H}}_f \mathbb{P}\mathcal{A}_T. 
\end{align*}
Then from \eqref{dim-AT} it follows that ${\rm dim}_f^{\mathcal{H}_{-1}} \mathcal{A}^{\rm exp} < \infty$. 
Thus, $\mathcal{A}^{\rm exp}$ is a generalized exponential attractor for $(\mathcal{H},S(t))$ with finite fractal 
dimension in $\widetilde{\mathcal{H}} = \mathcal{H}_{-1}$. This completes the proof of Theorem \ref{thm-exp}. \qed 

\medskip

\section*{Acknowledgements} This paper was done while the second author was visiting the Department of Mathematics of University of Chile, whose kind hospitality is gratefully acknowledged. 
He also thanks professors Gonzalo Robledo Rodr\'{\i}gues and \'{A}lvaro Casta\~{n}eda Gonzales for arranging funding support from 
FONDECYT, REGULAR grant 1170968. The first author was partially supported by CNPq grant 312529/2018-0.


\end{document}